\newtheorem{theorem}{Theorem}[section]
\newtheorem{lemma}[theorem]{Lemma}
\newtheorem{proposition}[theorem]{Proposition}
\theoremstyle{definition}
\newtheorem{remark}[theorem]{Remark}
\newcommand{\spann}{\mathrm{span}}
\newcommand{\R}{\mathbb{R}}
\newcommand{\N}{\mathbb{N}}
\newcommand{\Z}{\mathbb{Z}}
\newcommand{\T}{\mathbb{T}}
\newcommand{\C}{\mathbb{C}}
\newcommand{\id}{\mathrm{id}}
\newcommand{\Log}{\mathrm{Log}}
\renewcommand{\phi}{\varphi}
\begin{document}

\title[The zero-dispersion limit for the Benjamin--Ono equation on the circle]{The zero-dispersion limit for the Benjamin--Ono equation on the circle}

\author[O.~M{\ae}hlen]{Ola M{\ae}hlen}
\address[O.~M{\ae}hlen]{Mathematical Institute of Orsay\\
Paris-Saclay University\\
91400 Orsay, France}
\email[]{ola.maehlen\@@{}universite-paris-saclay.fr}

\keywords{Benjamin–Ono equation, zero-dispersion limit, dispersive shocks,  explicit formula, Toeplitz operators, Raney's lemma}

\subjclass[2020]{35B40, 35Q53, 37K15, 47B35}

\begin{abstract}
\noindent Using the explicit formula of P.~Gérard, we characterize the zero-dispersion limit for solutions of the Benjamin–Ono equation on the circle $\T\coloneqq \R/2\pi\Z$ with bounded initial data $u_0\in L^\infty(\T,\R)$. The result generalizes the work of L.~Gassot, who focused on periodic bell-shaped data, and complements the work of Gérard and X.~Chen who identified the zero-dispersion limit on the line with $u_0\in L^2\cap L^\infty(\R)$. Here, as well as in the mentioned cases, the characterization agrees with the one first obtained by Miller--Xu for bell-shaped data on the line: The zero-dispersion limit is given as an alternating sum of the branches of the multivalued solution of Burgers' equation. From this characterization, we compute regularity properties of the zero-dispersion limit, including maximum principles and an Oleinik estimate.
\end{abstract}
\maketitle
\section{Introduction}
\noindent The Benjamin–Ono equation takes the form
\begin{equation}\label{eq: BO}\tag{BO}
    \partial_t u + \partial_x(u^2-|D|u) =0,
\end{equation}
where $u\colon \R_t\times X_x\to \R$ with $X\in \{\R,\T\}$, and $|D|$ is the nonlocal Fourier multiplier $\mathcal{F}(|D|u)(t,\xi)=|\xi|\mathcal{F}(u)(t,\xi)$.\footnote{In the literature, this operator is occasionally written in terms of the Hilbert transform $|D|=\mathcal{H}\circ\partial_x=\partial_x\circ \mathcal{H}$.} It originates from the works of Benjamin \cite{Benjamin_1967}, Davis--Acrivos \cite{Davis_Acrivos_1967}, and Ono \cite{Ono}, as a model for deep internal waves in stratified fluids, an application that was recently rigorously justified \cite{Martin}. Like the KdV equation, \eqref{eq: BO} is completely integrable. And while this has been known for some time \cite{Nakamura,BockKruskal}, it was only recently discovered that the equation admits Birkhoff coordinates \cite{Birkhoff} and, more remarkably, that its solutions can be described by an explicit formula \cite{ExplicitFormulaLine}. These discoveries have led to a flurry of new results of which we mention a few: It is now known that \eqref{eq: BO} is well posed in the Sobolev space $H^{s}(X,\R)$ for every choice $s>-\frac{1}{2}$, and that these results are sharp \cite{SharpWellPosednessCircle,SharpWellPosedness}.\footnote{See the introduction in \cite{SharpWellPosedness} for an up to date summary of the well-posedness theory for Benjamin--Ono.} Furthermore, one can characterize the long-term dynamics of these solutions. On the circle $\T$, the solutions are Bohr almost periodic in time \cite{SharpWellPosednessCircle,Periodicity}, while on the line $\R$ the soliton resolution conjecture has been resolved \cite{solitonResolutionBO,SolitonResolutionConjecture} for all $H^1(\R)$ data with sufficient decay. Another application of these novel tools for \eqref{eq: BO} is in determining its zero-dispersion limit, which is the subject of interest here.

The problem goes as follows. Fix some initial datum $u_0$ and consider the solutions $(u^\varepsilon)_{\varepsilon>0}$ of the small-dispersion Benjamin–Ono equation
\begin{equation}\label{eq: BOepsilon}\tag{BO-$\varepsilon$}
        \partial_tu^\varepsilon + \partial_x\big((u^\varepsilon)^2-\varepsilon|D|u^\varepsilon\big) =0, \qquad u^\varepsilon(0,x)=u_0(x).
\end{equation}
When $\varepsilon=0$, we get the (globally ill-posed) Burgers equation where shocks/infinite slopes form in finite time.  But as soon as $\varepsilon>0$, well-posedness is ensured and shocks are replaced by rapid oscillations, so-called \textit{dispersive} shocks. As $\varepsilon\to 0$, these dispersive shocks become increasingly oscillatory inhibiting the existence of a strong limit, but leaving room for a \textit{weak} limit. This limit, if it exists, is called the \textit{zero-dispersion limit}, and the relevant problem is to both prove its existence and to characterize it. This is not a problem specific to \eqref{eq: BO}, and it was first considered for KdV. Since the origin of the subject is not so well-known, we give a brief account of it; for further details see \cite{LaxSummary,TrioSamarbeid}.\\

\subsection{Some history and related works} The story of zero-dispersion limits began with the discovery of numerical dispersion. In 1944, John von Neumann used a central difference scheme to model compressible fluid flows and observed that, where shocks should form, large-amplitude oscillations at mesh scale arose in stead \cite{vonNeumann}. He conjectured that these oscillations would converge weakly to the correct solution as the resolution of the mesh increased. Unconvinced by this claim, Peter Lax caught an interest in the phenomenon and, with collaborators, wrote a series of papers on the topic \cite{LaxConjecture,LaxLevermore,LaxGoodman,LaxChinese,LaxSummary}.\footnote{In \cite{LaxChinese} Lax concludes that von Neumann was wrong in his assertion; the scheme admits a weak limit, but the limit fails to satisfy the underlying equations.} Lax viewed numerical dispersion as a discrete analogue \cite{LaxSummary} of the oscillations featured in weakly dispersive PDEs (of the form analogous to \eqref{eq: BOepsilon}). Aiming to better understand numerical dispersion, this viewpoint served as motivation behind his three prominent papers with Levermore \cite{LaxLevermore} where they compute the zero-dispersion limit of the KdV equation. These papers have since sparked significant research into small-dispersion effects. We present a few of the results in this direction:

First, for KdV, we mention that Venakides \cite{VenakidesGeneral,VenakidesPeriod,VenikadesOscillations} built upon the work of Lax and Levermore by generalizing the admissible initial data (allowing, in particular, for periodic data) and by identifying leading-order asymptotics for the dispersive shocks; see \cite{DeiftVenakidesZhou} for a rigorous derivation of the latter result. Next, after KdV, the zero-dispersion limit was analyzed for NLS and mKdV, the second and third members, respectively, of the Zakharov--Shabat hierarchy. All members of the defocusing hierarchy were treated in \cite{DefocusingNLS}, and the \textit{odd} members of the focusing hierarchy were treated in \cite{FocusingNLS}. The latter work did not include focusing NLS which was instead the subject of \cite{NLSOscillations,MillerNLS} where its zero-dispersion limit, and leading-order asymptotics of the oscillations, were computed. We also mention that analogous results were derived for the Toda lattice \cite{TodalatticZeroDispersionLimit,TodaOscillations} around the same time period. Despite the complete integrability of the systems mentioned so far, dispersive shocks are a generic phenomenon \cite[Sec. 4]{SurveyPaper}. In particular, Dubrovin's universality conjecture \cite{DubrovinUniversality2} (which is supported by numerical evidence \cite{NumericalEvidence}, and admits various generalizations \cite{DubrovinKleinConjecture,DubrovinKleinNewConjecture, Masoero}\footnote{See \cite{Masoero,millerUniversality} for the corresponding conjecture for \eqref{eq: BO}.}) postulates precise asymptotic behavior of dispersive shocks for a large class of Hamiltonian equations --- most of which are not integrable. Thus, while not the mechanism behind these turbulent phenomena, integrability offers the means to study them. 

We return to the Benjamin–Ono equation. Many authors have contributed to the study of its zero-dispersion limit; we choose here to focus on a  selection of works beginning with that of Miller and Xu \cite{Miller1} (see their introduction for a broader overview). By an adaption of the Lax–Levermore theory for KdV, Miller--Xu were able to determine the zero-dispersion limit of \eqref{eq: BO} for bell-shaped initial data on the line. They found that this limit admitted a surprisingly simple description in terms of the multivalued solution of Burgers' equation. To state their result, we first recall that the Burgers equation, here written 
\begin{equation}\label{eq: burgersEquation}\tag{Bu}
    \partial_tu + \partial_x(u^2)=0,\quad u(0,x)=u_0(x),
\end{equation}
can be solved by the method of characteristics, yielding the following implicit solution formula
\begin{align}\label{eq: methodOfCharacteristics}
    u_B(t,x) = u_0(x-2tu_B(t,x)).
\end{align}
It is well-known that the resulting solution $u_B$ will not be uniquely determined by \eqref{eq: methodOfCharacteristics} for $t$ greater than the breaking time $T\coloneqq-(\inf_{x}2u_0'(x))^{-1}$ at which point the characteristics intersect (a shock has formed). But we embrace this defect and let $u_B$ be the multivalued function of values satisfying \eqref{eq: methodOfCharacteristics}. The aforementioned result of Miller--Xu can then be summarized as follows. For appropriately bell-shaped\footnote{See Definition 3.1.~in \cite{Miller1} for the precise assumptions.} initial data $u_0$ on the line, they demonstrate that the solutions $u^\varepsilon$ of \eqref{eq: BOepsilon} admit, for any $t>0$, the following weak limit in space as $\varepsilon\to 0$,
\begin{equation}\label{eq: alternatingSumFormula}
\mathop{\operatorname{w-lim}}_{\varepsilon\to 0}u_{\varepsilon}(t)=ZD[u_0](t),\text{ where }ZD[u_0](t,x)=\sum_{n=0}^{N}(-1)^{n}u_B^n(t,x),
\end{equation}
with $u_B(t,x)=\{u_B^0(t,x)<u_B^1(t,x)<\dots<u_B^N(t,x)\}$; the alternating sum formula in \eqref{eq: alternatingSumFormula} presupposes that $(t,x)$ avoids the null set of caustics arising from the characteristics\footnote{See Remark \ref{rem: failureAtCaustics} for a discussion on what goes wrong at the caustics}. Furthermore, they proved that $u^\varepsilon\to ZD[u_0]$ strongly for $t<T=-(\inf_{x}2u_0'(x))^{-1}$; an indication that the dispersive shocks arise precisely at the same time as classical shocks form. It is worth noting that the corresponding formula for the zero-dispersion limit of KdV, is much more implicit than \eqref{eq: alternatingSumFormula}.

While Miller and Xu posed strong restrictions on $u_0$ (necessitated by their method), they expected that their conclusion \eqref{eq: alternatingSumFormula} held for more general data. An alternate approach to the problem opened up when Gérard
discovered the explicit formula for \eqref{eq: BO} \cite{ExplicitFormulaLine}. In \cite{ZeroDispersionLimitGerard}, he extended the result \eqref{eq: alternatingSumFormula} to all initial data $u_0\in L^2\cap L^\infty(\R)$; shortly after, Chen \cite{ZeroDispersionLimitXiChen} extended the result to $L^2\cap L^\infty_{\mathrm{loc}}(\R)$ data of sublinear growth at infinity. In the spirit of Venakides for KdV, a recent paper \cite{OscillationsBO} also provides a leading-order description of the dispersive shocks of \eqref{eq: BO} for rational initial data on the line. 

However, corresponding results on the circle $\T$ (or equivalently, for periodic initial data), are almost nonexistent. The only proof of a zero-dispersion limit here is given by Gassot \cite{ZeroDispersionLouiseGassot}. Using the Birkhoff coordinates \cite{Birkhoff} of \eqref{eq: BO}, she computes zero-dispersion limit corresponding to bell-shaped initial data; the limit yet again agrees with \eqref{eq: alternatingSumFormula}. The restrictions on $u_0$ allowed Gassot to obtain asymptotic expressions, in the small-dispersion limit, for the eigenvalues of the corresponding Lax operator\footnote{It is for the same reason that Miller--Xu \cite{Miller1} require bell-shaped initial data. Such asymptotic expressions were first derived by Matsuno \cite{MATSUNO1,MATSUNO2}.} as needed to compute the Birkhoff coordinates. In contrast, the explicit formula for \eqref{eq: BO}, discovered after Gassot’s work, requires no knowledge of these eigenvalues and thus provides a more direct route to the zero-dispersion limit; this is the approach we take here.

\subsection{Strategy and main results}
We outline the paper as follows: In Section \ref{sec: preliminaries}, we scale the explicit formula for \eqref{eq: BO} to get an explicit formula for \eqref{eq: BOepsilon}. And after sending $\varepsilon\to0$, we obtain an expression for (and hence the existence of) the zero-dispersion limit $ZD[u_0]$; this is Proposition \ref{prop: existenceOfZDLimit}. However, it is not obvious that this expression coincides with the simpler `alternating sum formula' obtained by Miller--Xu and generalized to the $L^\infty$-setting by Proposition \ref{prop: generalizedASFormula}. Their equivalence is proved in Section \ref{sec: provingTheMainResults}, where the first step is to series expand the two expressions (Proposition \ref{prop: seriesExpansions}). Using Lemma \ref{lem: theRaney-typeLemma}, which has a combinatorial flavor, the two series are rearranged so to make their equivalence evident; this culminates in Theorem \ref{thm: characterizingTheZeroDispersionLimit}. With the newly obtained simpler expression for $ZD[u_0]$, additional regularity properties are derived as summed up in Theorem \ref{thm: propertiesOfZD}.

The approach here is analogous to the approach of Chen in \cite{ZeroDispersionLimitXiChen}, where the zero-dispersion limit of \eqref{eq: BO} on the line was characterized by a series expansion argument using the corresponding explicit formula. There too, a lemma of combinatorial nature played a part, namely the identity:
\begin{equation}\label{eq: theCombinatoricsResultOfXiChen}
    \int_{\R}T_{v}^k(1)\, dx = \frac{1}{k}\int_{\R}v^k\, dx,\qquad k\in \N,
\end{equation}
where $v\in L^1\cap L^\infty(\R,\R)$ and $T_v$ is a Toeplitz operator on the line (defined analogously to those defined on the circle in Section \ref{sec: hardySpacesAndToeplitzOperators}). This identity has an interesting indirect proof as explained in \cite[Section 3.2]{Miller1}: Each side of \eqref{eq: theCombinatoricsResultOfXiChen} is a zero-dispersion limit of the $k$'th conservation law for \eqref{eq: BO} written in a canonical form.\footnote{Identity \eqref{eq: theCombinatoricsResultOfXiChen} is also proved more directly in \cite{ZeroDispersionLimitXiChen} and \cite[Appendix]{Miller1}.} In this paper, the corresponding combinatorics result, Lemma \ref{lem: theRaney-typeLemma}, is proved directly; we have not found an analogous indirect proof.

Our main results are proved in Section \ref{sec: provingTheMainResults}, and read as follows:

\begin{theorem}[Characterizing the zero-dispersion limit]\label{thm: characterizingTheZeroDispersionLimit}
   Let $u_0\in L^\infty(\T,\R)$ and $u^\varepsilon\in C(\R, L^2(\T,\R))$ denote the unique solution\footnote{In the sense of Molinet \cite{MolinetPeriodic} or, equivalently, Gérard--Kappeler \cite{SharpWellPosednessCircle}.} of \eqref{eq: BOepsilon}. Then, as $\varepsilon\to 0$, $u^\varepsilon$ converges to a function $ZD[u_0]$ in $C(\R, L^2_{w}(\T,\R))$: For each $\varphi\in L^2(\T,\R)$ we have
   \begin{equation*}
     \lim_{\varepsilon\to 0}\langle u^\varepsilon(t),\varphi\rangle_{L^2(\T)}= \langle ZD[u_0](t),\varphi\rangle_{L^2(\T)},
   \end{equation*}
locally uniformly in $t\in\R$. The function $ZD[u_0](t)$ is, for a.e.~$x\in \T$, given by 
\begin{equation*}
    ZD[u_0](t,x)=\int_{\R}\chi(u_0(x-2ty),y)\, dy,\quad \chi(u,y)\coloneqq \mathbbm{1}_{u>y>0} - \mathbbm{1}_{u<y<0}.
\end{equation*}
As shown in Lemma \ref{lem: theObviousLemma} and illustrated in Figure \ref{fig: multivaluedBurgers}, this generalizes \eqref{eq: alternatingSumFormula}.
\end{theorem}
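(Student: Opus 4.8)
The plan is to prove Theorem~\ref{thm: characterizingTheZeroDispersionLimit} by combining three ingredients already advertised in the introduction: the scaled explicit formula for \eqref{eq: BOepsilon}, the passage to the limit $\varepsilon\to 0$ (Proposition~\ref{prop: existenceOfZDLimit}), and the combinatorial rearrangement (Lemma~\ref{lem: theRaney-typeLemma}) that identifies the resulting series with the ``alternating sum'' expression. First I would record the explicit formula for $u^\varepsilon$: after the scaling $u^\varepsilon(t,x) = \varepsilon\,v(\varepsilon^{-1}t, x)$ relating \eqref{eq: BOepsilon} to \eqref{eq: BO}, Gérard's formula on the circle expresses $\langle u^\varepsilon(t),\varphi\rangle$ in terms of a Toeplitz/shift operator acting on the Hardy space, evaluated at the initial datum $u_0$. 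The point of Proposition~\ref{prop: existenceOfZDLimit} is that this quantity converges, locally uniformly in $t$, to an explicit expression; I would take that as given and restate it, call it $ZD_{\mathrm{GC}}[u_0](t)$ (the ``Gérard--Chen'' form). This already establishes the convergence statement in $C(\R,L^2_w(\T,\R))$; what remains is purely the algebraic identity $ZD_{\mathrm{GC}}[u_0] = ZD[u_0]$ with $ZD[u_0]$ the integral of $\chi_0$ in the statement.

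Next I would reduce that identity to a formal power-series matching. Testing against $\varphi = e^{ikx}$ (or, to stay real, against $\cos$ and $\sin$ modes) and regarding $t$ (or a spectral parameter) as a formal variable, both $ZD_{\mathrm{GC}}[u_0](t)$ and $\int_\R \chi_0(x-2ty,y)\,dy$ have Taylor expansions in $t$ whose coefficients are integrals of monomials built from $u_0$; this is exactly Proposition~\ref{prop: seriesExpansions}. The coefficient on the $\chi_0$-side is elementary: since $\int_\R \chi_0(x,y)\,dy = u_0(x)$ and the shift $x\mapsto x-2ty$ inside the $y$-integral produces, upon expanding in $t$, the powers $\int_\R y^{\,m}\chi_0(x,y)\,dy = \tfrac{1}{m+1}\,u_0(x)^{m+1}$, one gets a clean generating function; differentiating in $x$ and integrating against a Fourier mode turns these into the Burgers-type moments. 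The coefficient on the Gérard--Chen side is a sum over iterated applications of the Toeplitz operator $T_{u_0}$ (compare \eqref{eq: theCombinatoricsResultOfXiChen}), i.e.\ a weighted sum of terms $\int_\T T_{u_0}^{a_1}(1)\,T_{u_0}^{a_2}(1)\cdots$ with combinatorial weights counting how the $\varepsilon\to0$ limit distributes the ``holes'' in the resolvent expansion. Lemma~\ref{lem: theRaney-typeLemma} — the Raney/cycle-lemma-type identity — is precisely what reorganizes this second family of sums into the first: it should say that a certain alternating multinomial sum over compositions collapses to $\tfrac{1}{k}\int u_0^k$ (the periodic analogue of \eqref{eq: theCombinatoricsResultOfXiChen}), and more generally matches the full generating function coefficient by coefficient.

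So the step-by-step outline is: (1) scale the explicit formula and quote Proposition~\ref{prop: existenceOfZDLimit} to get convergence to some $ZD_{\mathrm{GC}}[u_0]$ in $C(\R,L^2_w)$; (2) quote Proposition~\ref{prop: generalizedASFormula} / Lemma~\ref{lem: theObviousLemma} to know that the target expression $\int_\R\chi_0(x-2ty,y)\,dy$ is a well-defined $L^\infty$ function agreeing with the alternating-sum formula \eqref{eq: alternatingSumFormula} off the caustic set; (3) expand both $ZD_{\mathrm{GC}}[u_0](t)$ and the target in powers of $t$ via Proposition~\ref{prop: seriesExpansions}, checking the series converge for $|t|$ small (uniform $L^\infty$ bounds on $u_0$ give a positive radius); (4) apply Lemma~\ref{lem: theRaney-typeLemma} to see the two sequences of Taylor coefficients coincide, hence the two functions agree for small $|t|$, and then (5) bootstrap to all $t\in\R$ — either by an analytic-continuation / uniqueness argument in $t$, or, more robustly, by redoing steps (3)--(4) around an arbitrary base time $t_0$ using that $ZD_{\mathrm{GC}}[u_0](t_0+\cdot)$ is itself of the same form with $u_0$ replaced by $ZD[u_0](t_0)$'s ``defining profile'' (the multivalued Burgers data is translation-covariant in $t$ in the appropriate sense). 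I expect step (4), invoking and correctly bookkeeping Lemma~\ref{lem: theRaney-typeLemma}, to be the main obstacle: matching the combinatorial weights coming out of the $\varepsilon\to0$ limit of the Toeplitz-resolvent expansion with the plain binomial coefficients produced by expanding $\chi_0(x-2ty,y)$ requires identifying the right index set (compositions of $k$, or lattice paths) and the right sign/weight, and this is exactly where the ``Raney's lemma'' input does the work; the analytic issues — interchanging limit and integral, convergence radius of the series, the weak-continuity in $t$ — are comparatively routine given the earlier propositions.
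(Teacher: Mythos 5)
Your overall architecture is the paper's: Proposition \ref{prop: existenceOfZDLimit} for the convergence to a limit with an explicit Fourier expansion, Proposition \ref{prop: generalizedASFormula} for the Fourier expansion of the target, Proposition \ref{prop: seriesExpansions} to Taylor-expand both families of coefficients in $t$, and Lemma \ref{lem: theRaney-typeLemma} to match them. Two points in your plan, however, would not survive being written out.

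First, steps (3) and (5). There is no small-$|t|$ restriction to remove: since $T_{u_0}$ is bounded, the series behind \eqref{eq: theHardExpansion} and the elementary series \eqref{eq: theEasyExpansion} converge for every $t\in\R$, so the coefficient matching already gives the identity globally in time and no bootstrap is needed. More importantly, your ``more robust'' second bootstrap option is unsound: $ZD[\cdot]$ is not a semigroup (this is precisely why Theorem \ref{thm: propertiesOfZD}(6) needs infinitesimal compositions to recover the entropy solution), and $\big(e^{-2i(t_0+s)T_{u_0}}S^*\big)^k\Pi u_0$ is not of the form $\big(e^{-2isT_{v}}S^*\big)^k\Pi v$ for any single-valued profile $v$; the multivalued Burgers data at time $t_0$ cannot be fed back into the Toeplitz-operator side of the argument. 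Had the series genuinely had finite radius, this route would have failed, and analytic continuation in $t$ (your first option) would have been the only viable one.

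Second, a genuine missing step: the termwise expansion of each Taylor coefficient into monomials $\hat{u}_0(m_1)\cdots\hat{u}_0(m_{d+1})$ --- which is the object Lemma \ref{lem: theRaney-typeLemma} acts on --- requires the Fourier series of $u_0$ to be absolutely convergent, which fails for general $u_0\in L^\infty(\T,\R)$. The paper therefore proves the identity for $u_0\in H^1(\T,\R)$ and then extends to $L^\infty$ by density, using strong resolvent convergence on the $ZD$ side and the $L^2$-continuity of $AS[\cdot](t)$ (established in Section \ref{sec: propertiesOfTheZeroDispersionLimit}) on the other. Your proposal has no such approximation step, and ``uniform $L^\infty$ bounds give a positive radius'' does not repair it: the obstruction is the rearrangement into Fourier monomials, not convergence in $t$.
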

It is unclear if the previous theorem extends to all unbounded $u_0\in L^2(\T)$, though some cases can be included: It suffices that $u_0$ is bounded below, or above; see Remark \ref{rem: onTheLInfinityAssumption}. We also point out that the theorem holds true for $\varepsilon$-dependent initial data $u^\varepsilon_0$ that converges in the $L^2$-sense to $u_0\in L^\infty(\T)$ as $\varepsilon\to 0$; see Remark \ref{rem: initialDataCanDependOnEpsilon}.

The zero-dispersion limit map $(t,u_0)\mapsto ZD[u_0](t)$ is well-behaved and coincides with the `transport-collapse map' $(t,u_0)\mapsto T(t)u_0$ of Y. Brenier \cite{Brenier1981,Brenier1983,Brenier} used to approximate the entropy solutions of hyperbolic conservation laws. And in fact, several of the good properties of entropy solutions for Burgers' equation also hold for $t\mapsto ZD[u_0](t)$ as summed up in the next theorem. However, note that $ZD[u_0]$ will \textit{not} be a weak solution of Burgers' equation for sufficiently large $t$ if $u_0\neq\mathrm{const}$ (see \cite{ZeroDispersionLimitGerard}).
\begin{theorem}[Properties of $ZD{[\cdot]}$]\label{thm: propertiesOfZD}
    The mapping $ZD[\cdot]$ in Theorem \ref{thm: characterizingTheZeroDispersionLimit} extends uniquely to a continuous mapping $L^p(\T,\R)\to C(\R,L^p(\T,\R))$ for any $p\in [1,\infty)$, that is,
    \begin{equation*}
        ZD[u^\delta_0](t^\delta)\to ZD[u_0](t)\quad \text{in } L^p(\T,\R),
    \end{equation*}
    provided $(t^\delta, u_0^\delta)\to (t,u_0)$ in $\R\times L^p(\T,\R)$.
    Moreover, for $t\in\R$ we have\vspace{5pt}
    \begin{enumerate}
        \item Maximum principles:  \quad $\mathrm{ess}\inf_{x\in\T}u_0(x)\leq ZD[u_0](t) \leq \mathrm{ess}\sup_{x\in\T}u_0(x). $\vspace{8pt}
        \item Norm control:\quad
        $\|ZD[u_0](t)\|_{L^p}\leq \|u_0\|_{L^p},\quad  \text{for }p\in [1,\infty]. $\vspace{8pt}
        \item $L^1$-contractions:\quad $\|ZD[u_0](t)-ZD[v_0](t)\|_{L^1}\leq \|u_0-v_0\|_{L^1}.$\vspace{8pt}
        \item Oleinik estimate: $$\frac{2t}{x-y}\Big(ZD[u_0](t,x)-ZD[u_0](t,y)\Big)\leq 1,\quad \text{for }x\neq y.$$
        \item Strong convergence before breaking: If $u_0\in C^1(\T,\R)$, and $u^\varepsilon$ is the corresponding solution of \eqref{eq: BOepsilon}, then we have strong convergence
        \begin{equation*}
            \lim_{\varepsilon\to 0}\|u^\varepsilon(t)-ZD[u_0](t)\|_{L^2}=0,\quad \text{for }t\in [T_-,T_+],
        \end{equation*}
        where the breaking times are defined by
        \begin{equation}
            T_-\coloneqq -\frac{1}{\sup_{x\in\T}2u'_0(x)},\qquad   T_+\coloneqq -\frac{1}{\inf_{x\in\T}2u'_0(x)},
        \end{equation}
for nonconstant $u_0$, and otherwise by $T_\pm\coloneqq \pm\infty$ for when $u_0$ is constant (in which case $u^\varepsilon(t)=ZD[u_0](t)=u_0$ for all $t$ and $\varepsilon$).
        \item Entropy solution from Trotter's formula: If $u_0\in L^1(\T)$, then the unique entropy solution\footnote{In the sense of \cite{Kinetic}.} of Burgers' equation, $u_{\mathrm{ent}}\in C([0,\infty),L^1(\T,\R))$, can be obtained from infinitesimal compositions of $ZD$: 
        \begin{equation}\label{eq: trotterFormula}
            u_{\mathrm{ent}}(t)= \lim_{n\to\infty}\Big(ZD[\cdot](\tfrac{t}{n})\Big)^{\circ n}[u_0],\quad \text{in }L^1(\T),
        \end{equation}
    where $f^{\circ n}\coloneqq f\circ f\circ\dots\circ f$ ($n$ times).
    \end{enumerate}
\end{theorem}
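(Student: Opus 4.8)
The plan is to make every claim follow from the substitution $w=x-2ty$ in the formula of Theorem~\ref{thm: characterizingTheZeroDispersionLimit}. For $t\neq 0$ this rewrites $ZD[u_0](t,\cdot)$ as a signed count of Burgers characteristics,
\begin{equation}\label{eq: zd-lagrangian}
  ZD[u_0](t,x)=\frac{1}{2t}\int_{\R}\Big(\1_{\{w<x\}}-\1_{\{w+2tu_0(w)<x\}}\Big)\,dw,
\end{equation}
where $u_0$ is read $2\pi$-periodically and, for $u_0\in L^\infty$, the integrand is supported in $|w-x|\le 2|t|\,\|u_0\|_{L^\infty}$, so \eqref{eq: zd-lagrangian} is an absolutely convergent integral; I will also use the kinetic form $ZD[u_0](t,x)=\int_{\R}\chi_0(x-2ty,y)\,dy$ directly (with $ZD[u_0](0)=u_0$). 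In these terms $ZD[\cdot](t)$ is exactly Brenier's transport--collapse operator, so (1)--(4) should be recoverable from his arguments, which I would reproduce from \eqref{eq: zd-lagrangian}. I will also use that a nonconstant $u_0\in C^1(\T)$ has $\sup u_0'>0>\inf u_0'$ (because $\int_\T u_0'=0$), hence $T_-<0<T_+$, and that on $[T_-,T_+]$ the characteristic map $\Phi_t(w):=w+2tu_0(w)$ is nondecreasing, $\Phi_t'(w)=1+2tu_0'(w)\ge 0$.

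For (1), the integrand $g(y):=\chi_0(x-2ty,y)$ in $ZD[u_0](t,x)=\int_\R g(y)\,dy$ takes values in $\{-1,0,1\}$ with $\{g=1\}\subseteq(0,\sup_\T u_0)$ and $\{g=-1\}\subseteq(\inf_\T u_0,0)$ by definition of $\chi_0$, which integrates to the stated bounds (the cases $\sup_\T u_0\le 0$ or $\inf_\T u_0\ge 0$ being the same observation). For (3), I would use the elementary pointwise identity $\int_\R|\chi_0^{u_0}(z,y)-\chi_0^{v_0}(z,y)|\,dy=|u_0(z)-v_0(z)|$, Fubini, and the invariance of $\int_\T$ under $x\mapsto x-2ty$, to get $\|ZD[u_0](t)-ZD[v_0](t)\|_{L^1}\le\int_\R\!\int_\T|\chi_0^{u_0}-\chi_0^{v_0}|\,dx\,dy=\|u_0-v_0\|_{L^1}$; the choice $v_0\equiv 0$ also gives the case $p=1$ of (2), while $p=\infty$ is (1). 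For the remaining $p$ I would prove the convex-entropy estimate $\int_\T j(ZD[u_0](t))\,dx\le\int_\T j(u_0)\,dx$ for all convex $j$ with $j(0)=0$: splitting $j(s)=j'(0)s+\int_0^s\psi$ with $\psi:=j'-j'(0)$ nondecreasing and $\psi(0)=0$, the linear term is handled by conservation of mass $\int_\T ZD[u_0](t)=\int_\T u_0$ (same Fubini/invariance), while for the nonlinear term the bathtub principle applied to $y\mapsto\chi_0^{u_0}(x-2ty,y)$ (which has modulus $\le 1$ and the sign of $y$) gives $\int_\R\psi(y)\,\chi_0^{u_0}(x-2ty,y)\,dy\ge\int_0^{ZD[u_0](t,x)}\psi$, and integrating in $x$ with the invariance of $\int_\T$ yields $\int_\T\int_0^{ZD[u_0](t,x)}\psi\,dx\le\int_\T\int_0^{u_0(x)}\psi\,dx$; taking $j(s)=|s|^p$ gives (2). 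Property (4) is then immediate from \eqref{eq: zd-lagrangian}: for $t\neq 0$ and $x>y$, subtracting \eqref{eq: zd-lagrangian} at $x$ and $y$ and using $\1_{\{w<x\}}-\1_{\{w<y\}}=\1_{\{y\le w<x\}}$,
\begin{equation*}
  ZD[u_0](t,x)-ZD[u_0](t,y)=\frac{1}{2t}\Big((x-y)-\bigl|\{w:y\le\Phi_t(w)<x\}\bigr|\Big),
\end{equation*}
which is $\le\frac{x-y}{2t}$ for $t>0$ and $\ge\frac{x-y}{2t}$ for $t<0$, so $\frac{2t}{x-y}(ZD[u_0](t,x)-ZD[u_0](t,y))\le 1$ in both cases (the case $x<y$ by symmetry, $t=0$ trivially, and the unbounded-data case by passing to the limit along truncations). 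Finally, the continuous extension $L^p(\T)\to C(\R,L^p(\T))$ and the asserted joint continuity follow routinely: the $L^1$-contraction extends $ZD[\cdot](t)$ from $L^\infty$ to $L^1$; the convex-entropy bound $\int_\T(|ZD[u_0](t)|-R)_+^p\,dx\le\int_\T(|u_0|-R)_+^p\,dx$ upgrades $L^1$-convergence of $u_0^\delta\to u_0$ to $L^p$-convergence of $ZD[u_0^\delta](t)$ through Vitali's theorem; and continuity in $t$ comes from continuity of translation in \eqref{eq: zd-lagrangian} plus the same uniform-integrability bound.

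For (5), on $[T_-,T_+]$ the map $\Phi_t$ is nondecreasing, so the only characteristic reaching $(t,x)$ is $w_\ast=\Phi_t^{-1}(x)$ and \eqref{eq: zd-lagrangian} collapses to $ZD[u_0](t,x)=\frac{1}{2t}(x-w_\ast)=u_0(w_\ast)$; that is, $ZD[u_0](t)=u_B(t)$ is the solution of \eqref{eq: burgersEquation} (this is Lemma~\ref{lem: theObviousLemma}), classical on $(T_-,T_+)$, hence conserving $\int_\T u_B(t)^2=\int_\T u_0^2$, and by continuity of $t\mapsto\|ZD[u_0](t)\|_{L^2}$ this persists at $t=T_\pm$. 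Since \eqref{eq: BOepsilon} conserves momentum, $\|u^\varepsilon(t)\|_{L^2}=\|u_0\|_{L^2}=\|ZD[u_0](t)\|_{L^2}$ for $t\in[T_-,T_+]$, and together with the weak convergence $u^\varepsilon(t)\rightharpoonup ZD[u_0](t)$ of Theorem~\ref{thm: characterizingTheZeroDispersionLimit} and the Radon--Riesz property of $L^2$ this forces $\|u^\varepsilon(t)-ZD[u_0](t)\|_{L^2}\to 0$; the constant case is trivial since constants solve \eqref{eq: BOepsilon}. For (6), the cleanest route is to note that $ZD[\cdot](\tau)$ is Brenier's transport--collapse operator (which is what the formula of Theorem~\ref{thm: characterizingTheZeroDispersionLimit} says) and invoke his convergence theorem; self-containedly, I would verify that $\{ZD[\cdot](\tau)\}_{\tau\ge 0}$ meets the hypotheses of a Chernoff-type product formula for the $m$-accretive realization of $v\mapsto\partial_x(v^2)$ on $L^1(\T)$ --- $ZD[\cdot](0)=\mathrm{id}$, the $L^1$-contraction (3), order preservation (since $u_0\le v_0$ implies $\chi_0^{u_0}\le\chi_0^{v_0}$ pointwise, hence $ZD[u_0](\tau)\le ZD[v_0](\tau)$), conservation of mass and Galilean invariance, and consistency, which for $C^1$ data is \emph{exact} on a short time interval by the identification $ZD[u_0](\tau)=u_B(\tau)$ used in (5) --- and reach a general $L^1$ datum by density, using the $L^1$-contractivity of both sides of \eqref{eq: trotterFormula}.

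The soft part is (1)--(4): everything reduces to \eqref{eq: zd-lagrangian} and Fubini, the only step needing real care being the bathtub argument behind (2). The main obstacle is (6): one must show not merely that the iterates $ZD[\cdot](t/n)^{\circ n}$ converge, but that their limit is the \emph{entropy} solution of \eqref{eq: burgersEquation} rather than some other weak solution, which is where the monotonicity of $ZD[\cdot](\tau)$, the Oleinik bound (4), and the nonlinear-semigroup (or Brenier's) machinery must be brought in.
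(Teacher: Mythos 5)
Your argument is correct and shares the paper's overall skeleton --- every item is reduced to the kinetic representation $ZD[u_0](t,x)=\int_\R\chi_0(x-2ty,y)\,dy$, item (5) is settled by norm conservation plus weak convergence (Radon--Riesz), and item (6) by identifying $ZD[\cdot](\tau)$ with Brenier's transport--collapse operator --- but several of your technical devices genuinely differ from the paper's. For the $L^p$ norm control and the passage from $L^1$- to $L^p$-continuity you prove the full family of convex-entropy inequalities $\int_\T j(ZD[u_0](t))\,dx\le\int_\T j(u_0)\,dx$ via the bathtub principle (your inequality $\int_\R\psi(y)\,\chi_0(x-2ty,y)\,dy\ge\int_0^{ZD[u_0](t,x)}\psi$ does check out) and conclude by Vitali; the paper instead combines a layer-cake identity with the interpolation bound of Lemma \ref{lem: weightedInequality} to get a quantitative, $t$-independent local $\tfrac1p$-H\"older estimate for $v\mapsto AS[v](t)$ on $L^p(\T)$. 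Your route proves strictly more (all convex entropies decrease), the paper's gives an explicit modulus of continuity; both yield the joint continuity. Your Lagrangian rewriting $ZD[u_0](t,x)=\tfrac{1}{2t}\int_\R\big(\mathbbm{1}_{\{w<x\}}-\mathbbm{1}_{\{w+2tu_0(w)<x\}}\big)\,dw$ makes the Oleinik estimate and item (5) especially transparent, whereas the paper proves Oleinik from the pointwise bound $\chi_v(x,y+h)\le\chi_v(x,y)+\mathbbm{1}_{0>y>-h}$ and item (5) by showing Lemma \ref{lem: weightedInequality} is saturated when the characteristic equation has a unique root --- the same fact you phrase as monotonicity of $\Phi_t$ on $[T_-,T_+]$. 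The one thin spot is time-continuity: the Lagrangian formula degenerates at $t=0$ because of the $\tfrac{1}{2t}$ prefactor, so the ``continuity of translation'' step must be run on the kinetic form (or, as the paper does, reduced by density to $C^1$ data, for which an explicit Lipschitz-in-$t$ bound holds); this is a matter of writing rather than a gap.
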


\section{Preliminaries}\label{sec: preliminaries}
In this section we introduce the necessary framework and prove two smaller results, Propositions \ref{prop: existenceOfZDLimit} and \ref{prop: generalizedASFormula}, from which we shall later derive Theorem \ref{thm: characterizingTheZeroDispersionLimit}. 
\subsection{The Hardy spaces and Toeplitz operators}\label{sec: hardySpacesAndToeplitzOperators}
Let $C^\omega(\mathbb{D})$ denote the set of holomorphic functions on the open unit disc $\mathbb{D}=\{z\in\C\colon |z|<1\}$. We introduce the two Hardy spaces
\begin{align*}
    L^2_+(\T)=&\,\{f\in L^2(\T,\C)\colon \hat{f}(k)=0,\,\, \forall k<0\}, &
    \mathcal{H}^2(\mathbb{D})=&\,\{F\in C^\omega(\mathbb{D})\colon \textstyle\sum_{k=0}^\infty|F^{(k)}(0)|^2<\infty\}.
\end{align*}
We endow $L_+^2(\T)$ with its Hilbert space structure inherited from $L^2(\T,\C)$ where the latter is equipped with the inner product $\langle f,g\rangle \coloneqq\frac{1}{2\pi}\int_{\T}f\overline{g}\,dx$. We further introduce the associated orthogonal projection, the Szegő projector $\Pi\colon L^2(\T,\C)\to L^2_+(\T)$, which deletes any negative frequencies $\widehat{\Pi f}(k)=\mathbbm{1}_{k\geq 0}\hat{f}(k)$. The canonical bijection $L^2_+(\T)\to \mathcal{H}^2(\mathbb{D})$ is the mapping $f\mapsto F_f$ defined by
\begin{equation}\label{eq: theBijectionBetweenTheTwoHardySpaces}
F_f(z)=\sum_{k=0}^\infty \hat{f}(k)z^k,
\end{equation}
whose inverse is the $L^2_+(\T)$-limit $f(x)=\lim_{r\uparrow 1}F_f(re^{ix})$. We will make use of the following important observation: For any real-valued $g\in L^2(\T,\R)\subset L^2(\T,\C)$, it holds that
\begin{equation}\label{eq: u_0IsDerivableFromPiu_0}
    g=\Pi g + \overline{\Pi g} - \langle \Pi g,1\rangle,
\end{equation}
that is, $g$ is completely determined by $\Pi g\in L^2_+(\T)$ which again is completely determined by $F_{\Pi g}\in \mathcal{H}^2(\mathbb{D})$; for brevity, we will simply write $F_g$ in stead of $F_{\Pi g}$.

Next, with $b\in L^\infty(\T,\C)$ we define the Toeplitz operator $T_b\colon L^2_+(\T)\to L^2_+(\T)$ by
\begin{equation*}
    T_bf\coloneqq \Pi(bf),
\end{equation*}
for $f\in L^2_+(\T)$. Note that $T_b$ is linear and bounded, and that $T_b$ is self-adjoint whenever $b$ is real-valued: For $f,g\in L^2_+(\T)$ we have 
\begin{equation*}
    \langle \Pi(b f),g\rangle_{L^2_+(\T)}= \langle bf,g\rangle_{L^2(\T,\C)}=\langle f,bg\rangle_{L^2(\T,\C)}=\langle f,\Pi(bg)\rangle_{L^2_+(\T)}.
\end{equation*}
Two classical Toeplitz operators in our setting are the right- and left-shifts
\begin{equation*}
    S\coloneqq T_{q},\qquad\qquad S^*\coloneqq T_{\overline{q}},\qquad \text{where $q(x)\coloneqq  e^{ix}$.}
\end{equation*}
We stress that $S$ and $S^*$ do \textit{not} commute. Indeed, $S^*Sf=f$ while $SS^*f=f-\langle f,1\rangle$. A further illustration of this non-commutativity, is the following lemma that will be useful later.
\begin{lemma}[Compositions of Toeplitz-shifts]\label{lem: compositonsOfToeplitzShifts}
    For $y\in \Z$, define the $y$-shift by $S_y\coloneqq T_{q^y}$ where as before $q(x)=e^{ix}$. Then, $S_{y_1}S_{y_2}=S_{y_1+y_2}$ if, and only if, either $y_{1}\leq 0$ or $y_2\geq 0$. Moreover, for any finite sequence $y_1,\dots,y_J\in\Z$ we have
    \begin{equation}\label{eq: explicitCompositionOfToeplitzShifts}
        S_{y_1}S_{y_2}\cdots S_{y_J}(1)=\begin{cases}
            q^{y_1+\dots+y_J}, \text{ if } y_j+\dots + y_J\geq 0\text{ for all }j\in\{1,\dots,J\},\\
            0, \hspace{43pt}\text{else}.
        \end{cases}
    \end{equation}
\end{lemma}
\begin{proof}
By definition, we have $\Pi (q^k)= 0$ when $k<0$ and $\Pi (q^k)=q^k$ when $k\geq 0$; this is the only ingredient needed in the proof. To prove the first part of the lemma, we restrict our attention to $q^n$ for $n\in \N_0$, since $\spann\{q^{n}\colon n\in\N_0\}\subset L^2_+(\T)$ is dense and the shifts are bounded operators. A straight forward computation reveals that
\begin{equation}\label{eq: computingTwoShifts}
\begin{split}
    S_{y_1}S_{y_2}(q^n)=&\,\Pi(q^{y_1}\Pi(q^{y_2+n}))=\big(\mathbbm{1}_{y_1+y_2+n\geq 0}\big)\big(\mathbbm{1}_{y_2+n\geq 0}\big)q^{y_1+y_2+n}\\
    S_{y_1+y_2}(q^n)=&\, \Pi(q^{y_1+y_2+n})=\big(\mathbbm{1}_{y_1+y_2+n\geq 0}\big)q^{y_1+y_2+n}.
\end{split}
\end{equation}
If $y_1\leq 0$, then $y_1+y_2+n\geq 0$ only if $y_2+n\geq0$, which by \eqref{eq: computingTwoShifts} implies $S_{y_1}S_{y_2}(q^n)=S_{y_1+y_2}(q^n)$. If $y_2\geq 0$, then $y_2+n\geq 0$ holds trivially, and $S_{y_1}S_{y_2}(q^n)=S_{y_1+y_2}(q^n)$ again follows by \eqref{eq: computingTwoShifts}. Finally, if $y_1>0$ and $y_2<0$ then we set $n\coloneqq \max\{0,-(y_1+y_2)\}$ and observe that $y_1+y_2+n\geq 0$ while $y_2+n<0$, and so we get from \eqref{eq: computingTwoShifts} that $S_{y_1}S_{y_2}(q^n)=0\neq q^n=S_{y_1+y_2}(q^n)=q^n$. The second part of the lemma, equation \eqref{eq: explicitCompositionOfToeplitzShifts}, follows from a calculation similar to the first line in \eqref{eq: computingTwoShifts}. 
\end{proof}
\subsection{The explicit formula and its zero-dispersion limit}
Let $u_0\in L^\infty(\T,\R)$, and let $u\in C(\R, L^2(\T,\R))$ denote the corresponding solution of \eqref{eq: BO}. In light of \eqref{eq: u_0IsDerivableFromPiu_0} and the subsequent comment, $u(t)$ is completely determined by the function $F_{u(t)}\in \mathcal{H}^2(\mathbb{D})$ as defined in \eqref{eq: theBijectionBetweenTheTwoHardySpaces}. From \cite[Thm. 3]{ExplicitFormulaLine} (and Remark 1 therein), this last quantity is, for $t\in\R$ and $z\in\mathbb{D}$, given by
\begin{equation}\label{eq: explicitFormula}
    F_{u(t)}(z)=\big\langle\big(\mathrm{id}-ze^{it}e^{2it(D-T_{u_0})}S^*\big)^{-1}\Pi u_0,1\big\rangle,
\end{equation}
where $D\coloneqq -i\partial_x$ and $\{e^{2it(D-T_{u_0})}\}_{t\in\R}$ is the unitary group generated by the antiselfadjoint operator $2i(D-T_{u_0})$. Series expanding \eqref{eq: explicitFormula} in $z$, and using that $\lim_{r\uparrow 1}(\Pi u(t))(x)=F_{u(t)}(re^{ix})$, results in the following Fourier expansion of $\Pi u(t)$ 
\begin{equation}\label{eq: explicitFourierSeries}
    \big(\Pi u(t)\big)(x) = \sum_{k=0}^\infty e^{ikt}\big\langle\big(e^{2it(D-T_{u_0})}S^*\big)^{k}\Pi u_0,1\big\rangle e^{ikx} \quad \text{in }L^2_+(\T).
\end{equation}
The analogous explicit formula for the solution $u^{\varepsilon}$ of \eqref{eq: BOepsilon} can be obtained through a scaling argument: Note that $t\mapsto u^{\varepsilon}(t)$ solves \eqref{eq: BOepsilon} precisely when $t\mapsto  u^{\varepsilon}(t/\varepsilon)/\varepsilon$ solves \eqref{eq: BO}. That is, replacing $u_0$ by $u_0/\varepsilon$ in the series \eqref{eq: explicitFourierSeries} results in the Fourier expansion for $u^{\varepsilon}(t/\varepsilon)/\varepsilon$. And so, if we multiply each side by $\varepsilon$, make the change of variables $t\mapsto \varepsilon t$, and use that $T_{u_0/\varepsilon}=\varepsilon^{-1}T_{u_0}$, we similarly get
\begin{equation}\label{eq: explicitFormulaForBOEpsilon}
    \big(\Pi u^{\varepsilon}(t)\big)(x)=\sum_{k=0}^\infty e^{i\varepsilon kt}\big\langle\big(e^{2it(\varepsilon D-T_{u_0})}S^*\big)^{k}\Pi u_0,1\big\rangle e^{ikx}\quad \text{in }L^2_+(\T).
\end{equation}
From this formula, it is not difficult to show that  $(u^\varepsilon)_{\varepsilon>0}$ admits a weak-$L^2$ limit, which is what we do next. In what follows, $L^2_w(\T,\R)$ denotes $L^2(\T,\R)$ endowed with its weak topology, and we endow $C(\R, L^2_w(\T,\R))$ with the topology of compact convergence, that is, the topology generated by the seminorms
\begin{equation}\label{eq: theSeminormsOfCweakL2}
    |u|_{K,\varphi}\coloneqq \sup_{t\in K}|\langle u(t),\varphi\rangle|,
\end{equation}
where $K\subset \R$ is compact and $\varphi\in L^2(\T,\R)$. 
\begin{proposition}[Existence of a zero-dispersion limit]\label{prop: existenceOfZDLimit}
    Let $(u^\varepsilon)_{\varepsilon>0}\subset C(\R,L^2(\T,\R))$ be the solutions\footnote{In the sense of Molinet \cite{MolinetPeriodic}.} of \eqref{eq: BOepsilon} for some $u_0\in L^\infty(\T,\R)$. If $\varepsilon\to0$, then $u^\varepsilon$ converges in $C(\R, L^2_w(\T,\R))$, and the limiting function $ZD[u_0]$ is characterized by the Fourier expansion
    \begin{equation}\label{eq: explicitFourierExpansionOfZD}
       \Pi\big(ZD[u_0](t)\big)(x)= \sum_{k=0}^\infty\big\langle\big(e^{-2itT_{u_0}}S^*\big)^{k}\Pi u_0,1\big\rangle e^{ikx}\quad \text{in }L^2_+(\T),
    \end{equation}
    or, equivalently, by its $\mathcal{H}^2(\mathbb{D})$-representation $F_{ZD}(t,z)\coloneqq F_{ZD[u_0](t)}(z)$,
    \begin{equation}\label{eq: explicitFormulaForHardyRepresentationOfZeroDispersionWithoutCharacterization}
  F_{ZD}(t,z) = \big\langle\big(\mathrm{id}-ze^{-2itT_{u_0}}S^*\big)^{-1}\Pi u_0,1\big\rangle.
\end{equation}
\end{proposition}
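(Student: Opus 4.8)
The plan is to pass to the limit $\varepsilon\to 0$ directly in the Fourier expansion \eqref{eq: explicitFormulaForBOEpsilon}, term by term. Fix $t\in\R$ and $k\in\N$. The $k$-th Fourier coefficient of $\Pi u^\varepsilon(t)$ is $e^{i\varepsilon kt}\langle(e^{2it(\varepsilon D-T_{u_0})}S^*)^k\Pi u_0,1\rangle$. The scalar prefactor $e^{i\varepsilon kt}\to 1$ trivially, so the main point is to show
\begin{equation*}
    \big\langle\big(e^{2it(\varepsilon D-T_{u_0})}S^*\big)^{k}\Pi u_0,1\big\rangle \;\xrightarrow[\varepsilon\to0]{}\; \big\langle\big(e^{-2itT_{u_0}}S^*\big)^{k}\Pi u_0,1\big\rangle.
\end{equation*}
First I would establish that $e^{2it(\varepsilon D-T_{u_0})}\to e^{-2itT_{u_0}}$ in the strong operator topology on $L^2_+(\T)$, locally uniformly in $t$. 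Since $D$ is self-adjoint (unbounded) and $T_{u_0}$ is bounded self-adjoint, $\varepsilon D - T_{u_0}$ converges to $-T_{u_0}$ in strong resolvent sense (indeed $\varepsilon D\to 0$ strongly on the domain, and boundedness of $T_{u_0}$ makes this a trivial perturbation argument, or one can invoke the Trotter--Kato theorem). Strong resolvent convergence of self-adjoint generators upgrades to strong convergence of the associated unitary groups, locally uniformly in $t$, by the standard Trotter--Kato approximation theorem. Then, because $S^*$ is bounded and strong convergence is preserved under composition with fixed bounded operators (and under finite products, using uniform boundedness — here all the unitaries have norm one), we get $(e^{2it(\varepsilon D-T_{u_0})}S^*)^k\to (e^{-2itT_{u_0}}S^*)^k$ strongly; applying this to the fixed vector $\Pi u_0$ and pairing with $1$ yields the desired convergence of each Fourier coefficient, locally uniformly in $t$.

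Next I would promote the coefficientwise convergence to convergence in $C(\R,L^2_w(\T,\R))$. The key a priori bound is $\|u^\varepsilon(t)\|_{L^2(\T)}=\|u_0\|_{L^2(\T)}$ for all $\varepsilon,t$, which is the $L^2$-conservation law for \eqref{eq: BOepsilon} (equivalently visible from \eqref{eq: explicitFormulaForBOEpsilon} via Parseval once one knows $\Pi$ recovers $u^\varepsilon$ through \eqref{eq: u_0IsDerivableFromPiu_0}). Given a uniform $L^2$-bound, weak-$L^2$ convergence is equivalent to convergence of inner products against the total set $\{e^{ikx}:k\in\Z\}$; for $k\ge 0$ this is the Fourier-coefficient convergence just proved, for $k<0$ one uses the reality relation $\widehat{u^\varepsilon}(-k)=\overline{\widehat{u^\varepsilon}(k)}$ coming from \eqref{eq: u_0IsDerivableFromPiu_0}, and $k=0$ is handled by noting $\widehat{u^\varepsilon}(0)=\widehat{u_0}(0)$ is conserved (mean preservation). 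A density/equicontinuity argument then gives, for arbitrary $\varphi\in L^2(\T)$, convergence $\langle u^\varepsilon(t),\varphi\rangle\to\langle ZD[u_0](t),\varphi\rangle$ locally uniformly in $t$: split $\varphi$ into a trigonometric polynomial plus a small remainder, use the uniform bound on the remainder term, and use the already-established locally-uniform-in-$t$ convergence on the polynomial part. This is exactly convergence in $C(\R,L^2_w(\T,\R))$. Finally, the limit function $ZD[u_0](t)$ is real-valued because each $u^\varepsilon(t)$ is and weak limits preserve this, and its Szegő projection has the Fourier expansion \eqref{eq: explicitFourierExpansionOfZD} by construction; formula \eqref{eq: explicitFormulaForHardyRepresentationOfZeroDispersionWithoutCharacterization} is then just the generating-function form of \eqref{eq: explicitFourierExpansionOfZD}, obtained by resumming the Neumann series $(\mathrm{id}-zA)^{-1}=\sum_{k\ge0}z^kA^k$ with $A=e^{-2itT_{u_0}}S^*$, which converges for $|z|<1$ since $\|A\|\le 1$ (so the series has radius of convergence at least $1$, matching $z\in\mathbb{D}$).

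I expect the main obstacle to be the justification of the strong convergence of the unitary groups $e^{2it(\varepsilon D-T_{u_0})}$, specifically handling the unbounded operator $D$ cleanly. One must be careful that $\varepsilon D-T_{u_0}$ is self-adjoint on the domain of $D$ (true since $T_{u_0}$ is bounded self-adjoint, by Kato--Rellich) and that the convergence $\varepsilon D\to 0$ is understood in the right topology: it is \emph{not} norm convergence, only strong convergence on $\mathrm{Dom}(D)$, and one needs strong resolvent convergence, which does follow but requires the short argument that $(\varepsilon D-T_{u_0}+i)^{-1}\to(-T_{u_0}+i)^{-1}$ strongly — easiest via the second resolvent identity, writing the difference as $(\varepsilon D-T_{u_0}+i)^{-1}(\varepsilon D)(-T_{u_0}+i)^{-1}$ and noting $(-T_{u_0}+i)^{-1}$ maps into $L^2_+$ with no regularization, so one instead applies $\varepsilon D$ to a resolvent of $D$-controlled vectors; alternatively, and perhaps most cleanly, approximate $\Pi u_0$ in $L^2_+$ by trigonometric polynomials (on which $\varepsilon D$ acts explicitly and tends to zero) and use uniform boundedness of all the unitaries to control the error. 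The rest of the argument is soft functional analysis plus bookkeeping with Fourier coefficients.
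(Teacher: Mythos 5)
Your proposal is correct and follows essentially the same route as the paper: strong resolvent convergence of $\varepsilon D-T_{u_0}$ to $-T_{u_0}$ (via strong convergence on a core), upgrading to locally-$t$-uniform strong convergence of the unitary groups, composing with the contraction $S^*$ to get coefficientwise convergence, combining with the conserved $L^2$-norm and a density argument to obtain convergence in $C(\R,L^2_w)$, and finally resumming the Neumann series for \eqref{eq: explicitFormulaForHardyRepresentationOfZeroDispersionWithoutCharacterization}. The subtlety you flag about $D$ being unbounded is exactly the one the paper handles, using that $H^1_+(\T)$ is a core for the bounded limit operator $-2T_{u_0}$.
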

\begin{proof}
   Since \eqref{eq: BO} preserves the $L^2$-norm, so does \eqref{eq: BOepsilon}. Thus $\|u^\varepsilon(t)\|_{L^2}=\|u_0\|_{L^2}$ for all $\varepsilon>0$ and $t\in \R$. Suppose for now that each Fourier coefficient of $u^\varepsilon(t)$ converges as $\varepsilon\to 0$ and that the convergence holds locally uniformly in time. It then follows that $\langle u^\varepsilon (t),\varphi\rangle$ converges whenever $\varphi$ is a trigonometric polynomial and, by density and the uniform bound on $\|u^\varepsilon(t)\|_{L^2}$, the same holds for any $\varphi\in L^2(\T)$. Thus, $u^\varepsilon(t)$ converges weakly in $L^2(\T)$ for each $t\in\R$ to a limit denoted $ZD[u_0](t)$. Moreover, by our assumption, this convergence holds locally uniformly in time in the sense that
   \begin{equation}\label{eq: convergenceInTheFunctionSpace}
           \lim_{\varepsilon\to 0}|u^\varepsilon - ZD[u_0]|_{K,\varphi}=0,
   \end{equation}
   for each compact $K\subset \R$ and $\varphi\in L^2(\T)$, where the seminorm is defined in \eqref{eq: theSeminormsOfCweakL2}. Note that \eqref{eq: convergenceInTheFunctionSpace} ensures that $t\mapsto ZD[u_0](t)$ is continuous in $L^2_w(\T)$ since the uniform limit of continuous functions is continuous.
   
The first part of the proposition thus follows if we can establish (uniformly in time) the $\varepsilon\to 0$ limit of the Fourier coefficients of $u^\varepsilon(t)$. Since each $u^\varepsilon(t)$ is real-valued, it suffices to do so for the Fourier coefficients corresponding to the nonnegative frequencies for which we can use the explicit formula \eqref{eq: explicitFormulaForBOEpsilon}.

For this purpose, consider the selfadjoint operators $\mathcal{L}^\varepsilon\coloneqq \varepsilon D - T_{u_0}$ on $L^2_+(\T)$ whose domain is $H^1_+(\T)\coloneqq H^1(\T,\C)\cap L^2_+(\T)$. We note the following two trivial facts: Firstly, 
    \begin{equation}\label{eq: strongConvergenceOnCore}
        \lim_{\varepsilon\to 0}\|(\mathcal{L}^\varepsilon +T_{u_0})f\|_{L^2_+(\T)}=0,\quad\text{for all }f\in H^1_+(\T).
    \end{equation}
Secondly, because $H^1_+(\T)\subset L^2_+(\T)$ is dense and $T_{u_0}$ is bounded on $L^2_+(\T)$, it follows that $H^1_+(\T)$ is a core for $-T_{u_0}$. Consequently,  $\mathcal{L}^\varepsilon\to -T_{u_0}$ in the strong resolvent sense \cite[Theorem VIII.25.(a)]{ReedSimon1} and so $g(\mathcal{L}^{\varepsilon})\to g(-T_{u_0})$ strongly for any $g\in C_{b}(\R,\C)$ \cite[Theorem VIII.20.(b)]{ReedSimon1}. In particular, $e^{2it\mathcal{L}^\varepsilon}\to e^{-2itT_{u_0}}$ strongly and, in fact, we claim that this convergence holds locally uniformly in time, that is
\begin{equation}\label{eq: uniformConvergenceInTime}
    \lim_{\varepsilon\to 0}\sup_{t\in K}\|(e^{2it\mathcal{L}^\varepsilon}- e^{-2itT_{u_0}})f\|_{L^2_+(\T)}=0,
\end{equation}
for every compact $K\subset \R$ and $f\in L^2_+(\T)$. By density, it suffices to prove \eqref{eq: uniformConvergenceInTime} when $f\in H_+^1(\T)$. For such $f$, we find that the $L^2_+(\T)$-valued functions $G_\varepsilon (t)\coloneqq e^{2it\mathcal{L}^\varepsilon}\!f$ are, for $\varepsilon\in[0,1]$, uniformly Lipschitz continuous because $\|G'_\varepsilon(t)\|_{L^2_+(\T)}= 2\|\mathcal{L}^\varepsilon f\|\leq 2\|Df\|_{L^2_+(\T)} + 2\|T_{u_0}f\|_{L^2_+(\T)}\eqqcolon 2L$.  Thus, for any sequence $(t_n,\varepsilon_n)\to (t,0)$ we obtain $\|G_{\varepsilon_n}(t_n)-G_{0}(t_n)\|_{L^2_+(\T)}\leq 2L|t_n-t| + \|G_{\varepsilon_n}(t)-G_{0}(t)\|_{L^2_+(\T)}\to 0$, which proves \eqref{eq: uniformConvergenceInTime}. This further implies that the following limits hold locally uniformly in time: 
\begin{equation*}
\lim_{\varepsilon\to 0}\Big(e^{2it\mathcal{L}^{\varepsilon}}S^*\Big)^k\Pi u_0=\Big(e^{-2itT_{u_0}}S^*\Big )^k\Pi u_0\quad \text{in }L^2_+(\T),\quad k\in\N_0.
\end{equation*}
From the discussion at the start of the proof, and the explicit formula \eqref{eq: explicitFormulaForBOEpsilon}, we conclude that $u^\varepsilon\to ZD[u_0]$ in $C(\R,L^2_w(\T))$, where $ZD[u_0]$ is characterized by \eqref{eq: explicitFourierExpansionOfZD}. Finally, expanding $(\mathrm{id}-ze^{-2itT_{u_0}}S^*)^{-1}$ as a Neumann series in $z$, one obtains \eqref{eq: explicitFourierExpansionOfZD} from \eqref{eq: explicitFormulaForHardyRepresentationOfZeroDispersionWithoutCharacterization} upon substituting $z\mapsto re^{ix}$ and letting $r\uparrow 1$. 
\end{proof}

\begin{remark}[On the restriction $u_0\in L^\infty(\T,\R)$]\label{rem: onTheLInfinityAssumption}
    Since both \eqref{eq: BO} and the mapping $u_0\mapsto ZD[u_0]$ are well-posed for $L^2$ data, one may naturally expect that Theorem \ref{thm: characterizingTheZeroDispersionLimit} extends to $u_0\in L^2(\T,\R)$. This could be true, but requires a different proof: If $u_0\in L^2(\T,\R)$ is unbounded, then the operator $T_{u_0}$ is also unbounded, as first shown by Toeplitz \cite{Toeplitz1,Toeplitz2}. This is unproblematic for the explicit formula since $T_{u_0}$ is still a well-defined symmetric operator on $H^1_+(\T)=\mathrm{dom}(D)$ which is ``infinitesimally small'' with respect to $D$, that is, for any $f\in H^1_+(\T)$ we have
    \begin{equation*}
        \|T_{u_0} f\|_{L^2_+(\T)}\leq  \|u_0f\|_{L^2(\T)}\leq \|u_0\|_{L^2(\T)}\|f\|_{L^\infty(\T)}\leq a\|Df\|_{L^2_+(\T)} + b\|f\|_{L^2_+(\T)}
    \end{equation*} 
    where $a>0$ can be made arbitrarily small provided $b>0$ (which is independent of $f$) is made sufficiently large.
 It follows by the Kato--Rellich theorem \cite[Theorem X.12]{ReedSimon2} that $D-T_{u_0}$ is selfadjoint on $H^1_+(\T)$. The same is true for $\varepsilon D-T_{u_0}$ for any $\varepsilon>0$, but the $\varepsilon\downarrow 0$-limit becomes ambiguous, because $T_{u_0}$ need not be essentially self-adjoint on $H^1_+(\T)$ \cite[Appendix A]{ToeplitzCounterExample}. In the previous proof, it is then no longer clear if the strong limit of $e^{2it\mathcal{L}^\varepsilon}$ exists as $\varepsilon\downarrow 0$ or, if so, what this limit is.

   On the other hand, we note that there exist unbounded $u_0\in L^2(\T,\R)$ such that $T_{u_0}$ is still  essentially selfadjoint on $H^1_+(\T)$. As pointed out in \cite{UnboundedToeplitz}, this holds whenever $u_0$ is locally semibounded, that is
    \begin{equation}\label{eq: thm1AppliesToSemiboundedData}
        \overline{u_0^{-1}((-\infty,-R])}\cap  \overline{u_0^{-1}([R,\infty))}=\emptyset,\quad \text{for sufficiently large }R>0.
    \end{equation}
For such $u_0$, Theorem \ref{thm: characterizingTheZeroDispersionLimit} applies --- the proof being the same as for bounded $u_0$.
\end{remark}

\begin{remark}[Letting the initial data depend on $\varepsilon$]\label{rem: initialDataCanDependOnEpsilon}
    In the previous proof, the steps would be exactly the same if the initial data was $\varepsilon$-dependent, i.e.~ if the family of solutions $(u^\varepsilon)_{\varepsilon>0}$ had initial data $(u_0^{\varepsilon})_{\varepsilon>0}\subset L^2(\T,\R)$ such that the following $L^2$-limit held, $\lim_{\varepsilon\to 0}u_0^\varepsilon=u_0\in L^\infty(\T,\R)$. Then $\mathcal{L}^\varepsilon= \varepsilon D - T_{u_0^\varepsilon}$ would still converge strongly to $-T_{u_0}$ on $H^1_+(\T)$. Proposition \ref{prop: existenceOfZDLimit} thus holds true for $\varepsilon$-dependent initial data, and so does Theorem \ref{thm: characterizingTheZeroDispersionLimit}.
\end{remark}

\subsection{The generalization of Miller--Xu's alternating sum formula}
The characterization \eqref{eq: alternatingSumFormula} of $ZD[u_0]$ by Miller and Xu, requires that $u_{B}$ is a finite set for a.e.~$(t,x)\in \R\times \T$. This is true for sufficiently smooth $u_0$ (finite variation will do), but in general false for $u_0\in L^\infty(\T,\R)$. We here introduce an appropriate generalization of their formula.

Consider for now $u_{0}\in C^1(\T,\R)$, and let $u_B$ denote the corresponding classical solution of Burgers' equation \eqref{eq: burgersEquation} which exists for sufficiently small times $|t|\ll 1$. In the spirit of the kinetic formulation for hyperbolic conservation laws \cite{Kinetic} we introduce the $\chi$-function
\begin{equation*}
        \chi(u,y)\coloneqq  \mathbbm{1}_{u>y>0} -\mathbbm{1}_{u<y<0},\qquad u,y\in\R,
\end{equation*}
and write for brevity $\chi_B\coloneq \chi\circ u_B=\chi(u_B(\cdot,\cdot),\cdot)$. Intuitively, $\chi_B(t,\cdot,\cdot)$ represents the $(x,y)$-region enclosed by the graph $\{(x,y)\colon y=u_B(t,x)\}$ and the line $y=0$. Inside this region, $\chi_B(t,x,y)=\pm 1$ depending on $y\gtrless 0$, while outside it $\chi_B(t,x,y)=0$; see Figure \ref{fig: multivaluedBurgers}. The transformation $u_B\mapsto \chi_B$ is invertible, because $u_B(t,x)=\int_{\R}\chi(u_B(t,x),y)\, dy$, and has the benefit that \eqref{eq: burgersEquation} reduces to a linear transport equation (interpreted in the weak sense)
\begin{equation*}
    \partial_t \chi_B + 2y\partial_x\chi_B = 0,\quad \implies \quad \chi_B(t,x,y)=\chi_B(0,x-2ty,y)=\chi(u_0(x-2ty),y),
\end{equation*}
where the argument $x-2ty$ is understood modulo $2\pi$. That is, for as long as $u_B$ is a classical solution of \eqref{eq: burgersEquation}, it is explicitly given by $u_B(t,x)=\int_{\R}\chi(u_0(x-2ty),y)dy$. When $u_B$ becomes multivalued, this last identity generalizes to \begin{equation}\label{eq: theGeneralizedMillerXuFormula}
   \sum_{n=0}^{N}(-1)^nu^n_B(t,x)= \int_{\R}\chi(u_0(x-2ty),y)\, dy ,
\end{equation}
with $u_B(t,x)=\{u_B^0(t,x)<u_B^1(t,x)<\dots<u_B^N(t,x)\}$ and where $(t,x)\in \R\times \T$ must avoid the null set of caustics arising from the characteristics $y=u_0(x-2ty)$. A proof of this identity is given in the appendix and it is illustrated in Figure \ref{fig: multivaluedBurgers}. 
\begin{figure}[h]
  \centering
  \includegraphics[width=\linewidth, trim=-1cm 0cm -1cm 0cm, clip]{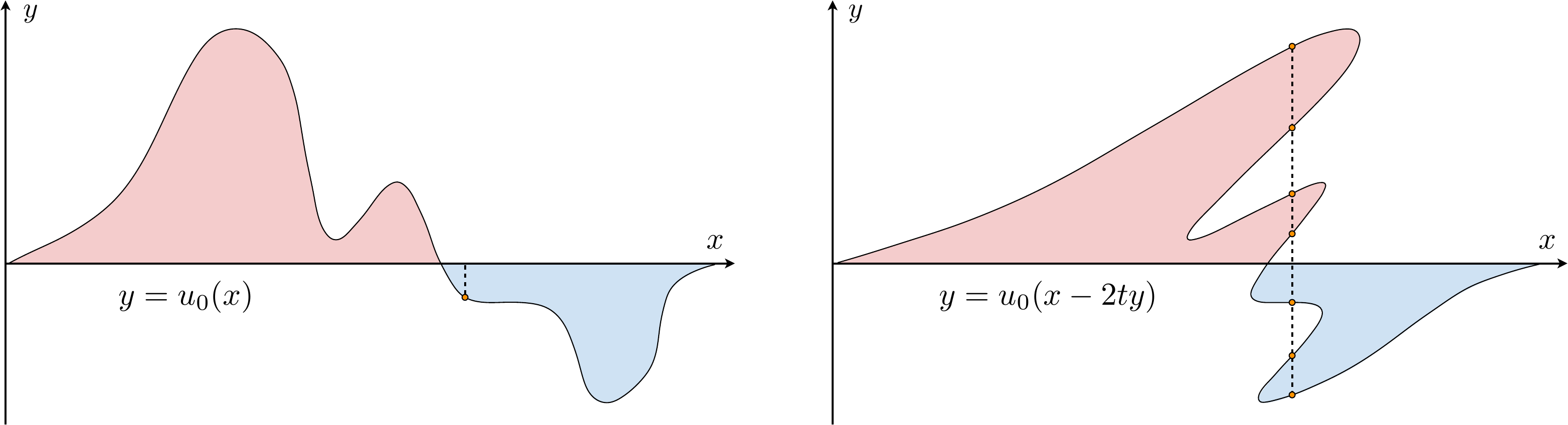}
  \caption{The graph (black curve) of the multivalued solution $u_B=u_0(x-2t u_B)$ of Burgers' equation, shown at $t=0$ and at a later time $t>0$. The red region corresponds to $\chi(u_0(x-2ty),y)=1$, while the blue region corresponds to $\chi(u_0(x-2ty),y)=-1$. The dashed vertical line intersects the graph once initially and seven times at the later time; the alternating sum of these seven $y$-values coincides with the integral of $\chi(u_0(x-2ty),y)$ along the dashed line.}
  \label{fig: multivaluedBurgers}
\end{figure}
The benefit of \eqref{eq: theGeneralizedMillerXuFormula} is that the right-hand side extends to all data $u_0\in L^\infty(\T,\R)$. Naturally, we expect it to coincide with the zero-dispersion limit from Proposition \ref{prop: existenceOfZDLimit}, that is, we aim to prove that
\begin{equation*}
    ZD[u_0](t,x) =  \int_{\R}\chi(u_0(x-2ty),y)\, dy.
\end{equation*}
Since we know the Fourier series of $ZD[u_0]$, we compute this for the right-hand side too:
\begin{proposition}[Generalized alternating sum formula]\label{prop: generalizedASFormula}
 For $u_0\in L^\infty(\T,\R)$ and $t\in \R$, consider the function $AS[u_0](t)\in L^\infty(\T,\R)$ defined by
 \begin{equation*}
     AS[u_0](t,x)\coloneqq \int_{\R}\chi(u_0(x-2ty),y)\, dy,\qquad \chi(u,y)\coloneqq \mathbbm{1}_{u>y>0} -\mathbbm{1}_{u<y<0},\quad u,y\in\R,
 \end{equation*}
where the argument $x-2ty$ is understood modulo $2\pi$. Then $\Pi\big(AS[u_0](t)\big)$ admits the Fourier expansion 
 \begin{equation}\label{eq: theFourierSeriesOfAS}
    \Pi\big(AS[u_0](t)\big)(x)= \sum_{k=0}^\infty \bigg\langle\frac{1-e^{-2itku_0}}{2itk},q^k\bigg\rangle e^{ikx} \quad \text{in }L^2_+(\T),
 \end{equation}
 where $q(x)= e^{ix}$, and where the coefficients must be interpreted in limit sense when $tk=0$. Equivalently, $AS[u_0](t)$ can be represented in $\mathcal{H}^2(\mathbb{D})$ by $F_{AS}(t,z)\coloneqq F_{AS[u_0](t)}(z)$ which reads
 \begin{equation}\label{eq: theHardyElementOfAS}
         F_{AS}(t,z)= \frac{1}{2\pi}\int_{\T}u_0(x)\, dx + \frac{1}{4\pi it}\int_{\T}\mathrm{Log}\bigg(\frac{1-ze^{-i(x+2tu_0(x))}}{1-ze^{-ix}}\bigg)\, dx,
 \end{equation}
 where $\Log$ denotes the principal-branch logarithm.
\end{proposition}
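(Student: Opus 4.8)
The plan is to compute the Fourier coefficients of $AS[u_0](t)$ directly from the definition, and then recognize the resulting series as the Taylor expansion in $z$ of the claimed closed form \eqref{eq: theHardyElementOfAS}. The starting point is Fubini: for $k\geq 1$,
\begin{equation*}
    \widehat{AS[u_0](t)}(k) = \frac{1}{2\pi}\int_{\T}\int_{\R}\chi_0(x-2ty,y)\,e^{-ikx}\,dy\,dx = \frac{1}{2\pi}\int_{\R}e^{-2itky}\int_{\T}\chi_0(x,y)\,e^{-ikx}\,dx\,dy,
\end{equation*}
after the substitution $x\mapsto x+2ty$ in the inner integral. The inner $x$-integral is elementary once one notes that, for fixed $y>0$, $\chi_0(\cdot,y)=\mathbbm{1}_{u_0>y}$ is the indicator of the (measurable) set $\{u_0>y\}$, while for $y<0$ it is $-\mathbbm{1}_{u_0<y}$; I would swap the order of the $y$- and $x$-integrations once more (now that the oscillatory factor $e^{-2itky}$ has been isolated) and use the layer-cake identity $\int_0^{u_0(x)}dy=u_0^+(x)$, $\int_{u_0(x)}^0 dy = -u_0^-(x)$ to collapse the double integral. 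Concretely, writing $A(x)\coloneqq\min(u_0(x),0)$ and $B(x)\coloneqq\max(u_0(x),0)$ one gets $\int_{\R}\chi_0(x-2ty,y)e^{-2itk(y-?)}\cdots$; the cleanest route is: for fixed $x$, $\int_{\R}\chi_0(x,y)e^{-2itky}\,dy=\int_{A(x)}^{B(x)}e^{-2itky}\,dy$ (both when $u_0(x)\gtrless 0$, since $\chi_0$ carries the sign), which equals $\dfrac{e^{-2itkA(x)}-e^{-2itkB(x)}}{2itk}$. Since exactly one of $A(x),B(x)$ is $0$ and the other is $u_0(x)$, this is $\dfrac{1-e^{-2itku_0(x)}}{2itk}$ in all cases. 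Undoing the shift $x\mapsto x-2ty$ inside the $x$-integral (which is what the substitution above already accounts for) then yields precisely $\widehat{AS[u_0](t)}(k)=\big\langle \tfrac{1-e^{-2itku_0}}{2itk},q^k\big\rangle$, which is \eqref{eq: theFourierSeriesOfAS}; the case $k=0$ gives $\tfrac{1}{2\pi}\int_{\T}u_0\,dx$ by the same computation with $e^{-2itky}$ replaced by $1$, matching the $kt=0$ limiting interpretation.

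For the $\mathcal{H}^2(\mathbb{D})$-representation I would pass from \eqref{eq: theFourierSeriesOfAS} to \eqref{eq: theHardyElementOfAS} via \eqref{eq: theBijectionBetweenTheTwoHardySpaces}: $F_{AS}(t,z)=\sum_{k\geq 0}\widehat{AS[u_0](t)}(k)z^k$. Split the $k\geq 1$ sum as $\tfrac{1}{4\pi it}\int_{\T}\sum_{k\geq 1}\tfrac{1}{k}\big(e^{-ikx}-e^{-ik(x+2tu_0(x))}\big)z^k\,dx$ and use the Taylor series $-\Log(1-w)=\sum_{k\geq 1}w^k/k$ for $|w|<1$, with $w=ze^{-ix}$ and $w=ze^{-i(x+2tu_0)}$ respectively (both of modulus $|z|<1$, so the series converge absolutely and uniformly in $x$, justifying the interchange of sum and integral). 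This produces $\tfrac{1}{4\pi it}\int_{\T}\big(\Log(1-ze^{-i(x+2tu_0)})-\Log(1-ze^{-ix})\big)\,dx$, i.e. exactly the logarithm of the quotient in \eqref{eq: theHardyElementOfAS} once one checks that the difference of the two principal-branch logarithms equals the principal-branch logarithm of the quotient here — this is automatic because both arguments $1-ze^{-i(x+2tu_0)}$ and $1-ze^{-ix}$ lie in the right half-plane $\{\real > 0\}$ for $|z|<1$, on which $\Log$ is additive in the required sense (the sum of two arguments of modulus $<1$ about $1$ never crosses the branch cut). Adding the $k=0$ term gives \eqref{eq: theHardyElementOfAS}.

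The only genuine subtlety is the integrability and measurability bookkeeping needed to justify the two applications of Fubini and the sum–integral interchange. Here $u_0\in L^\infty(\T,\R)$ is essential: it makes the $y$-support of $\chi_0(x-2ty,\cdot)$ contained in the fixed compact interval $[-\|u_0\|_\infty,\|u_0\|_\infty]$, so all integrands are bounded with compact $y$-support and Fubini applies on $\T\times[-\|u_0\|_\infty,\|u_0\|_\infty]$ without difficulty; the function $(x,y)\mapsto\chi_0(x,y)$ is jointly measurable since $\{(x,y):u_0(x)>y>0\}$ is the intersection of the measurable set $\{(x,y):u_0(x)>y\}$ (preimage under $(x,y)\mapsto u_0(x)-y$) with an open half-space. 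I expect this step to be routine rather than an obstacle, and I would dispatch it in a line or two; the combinatorial heart of the paper lies not here but in reconciling \eqref{eq: theHardyElementOfAS} with \eqref{eq: explicitFormulaForHardyRepresentationOfZeroDispersionWithoutCharacterization}, which is deferred to Section \ref{sec: provingTheMainResults}.
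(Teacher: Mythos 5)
Your proposal is correct and follows essentially the same route as the paper: compute $\widehat{AS[u_0](t)}(k)$ by Fubini and the substitution $x\mapsto x+2ty$, reduce the $y$-integral to $\tfrac{1-e^{-2itku_0}}{2itk}$, and then recognize \eqref{eq: theHardyElementOfAS} via the series $-\Log(1-w)=\sum_{k\geq 1}w^k/k$ together with the observation that both arguments lie in the right half-plane so the principal logarithms subtract correctly. One small bookkeeping slip: for $u_0(x)<0$ one has $\int_{\R}\chi_0(x,y)e^{-2itky}\,dy=-\int_{A(x)}^{B(x)}e^{-2itky}\,dy=\int_0^{u_0(x)}e^{-2itky}\,dy$ (the oriented integral, as in the paper), not $+\int_{A}^{B}$; your subsequent evaluation commits a compensating sign error, so the stated coefficient $\big\langle\tfrac{1-e^{-2itku_0}}{2itk},q^k\big\rangle$ is nonetheless the correct one.
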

\begin{remark}
    The $\Log$-term is well defined because $w\in \mathbb{D}\implies\mathrm{Re}[1-w]>0$, which means that
    \begin{equation*}
        \frac{1-w_1}{1-w_2}\in \C\setminus(-\infty,0],\quad \forall w_1,w_2\in \mathbb{D}.
    \end{equation*}
We also point out that the analogous expression for \eqref{eq: theHardyElementOfAS} on the real line \cite{ZeroDispersionLimitGerard,ZeroDispersionLimitXiChen} is
\begin{equation}\label{eq: XiChensASFormula}
    (t,\tilde z)\mapsto \frac{1}{4\pi it}\int_{\R}\Log\bigg(1+\frac{2t u_0(x)}{x-\tilde z}\bigg)\, dx,
\end{equation}
with domain $(t,\tilde{z})\in\R\times \C_+$ and $\C_+\coloneqq \{w\in \C\colon \mathrm{Im}[w]>0\}$.
It is indeed possible to obtain \eqref{eq: XiChensASFormula} as a long-period-limit of \eqref{eq: theHardyElementOfAS} (as one might expect), but we will not do so here.
\end{remark}
\begin{proof}[Proof of Proposition \ref{prop: generalizedASFormula}]
For $k\geq 0$, we compute 
\begin{align*}
    \mathcal{F}_x\Bigg[\int_{\R}\chi(u_0(x-2ty),y)\, dy\Bigg](k)
    =&\, \frac{1}{2\pi}\int_{\R}\int_{\T}\chi(u_0(x-2ty),y)e^{-ikx}\, dx\,dy\\ =&\, \frac{1}{2\pi}\int_{\R}\int_{\T}\chi(u_0(x),y)e^{-ik(x+2ty)}\, dx\, dy\\
    =&\,  \frac{1}{2\pi}\int_{\T}\bigg(\int_0^{u_0(x)}e^{-2itky}\, dy \bigg)e^{-ikx}\, dx\\
    =&\, \frac{1}{2\pi}\int_{\T}\bigg(\frac{1-e^{-2itku_0(x)}}{2itk}\bigg)e^{-ikx}\, dx
     =  \bigg\langle\frac{1-e^{-2itku_0}}{2itk},q^k\bigg\rangle.
\end{align*}
Thus, \eqref{eq: theFourierSeriesOfAS} is clear. And since  
\begin{equation*}
    \mathrm{Log}\bigg(\frac{1-z\overline{q}e^{-2itu_0}}{1-z\overline{q}}\bigg) = \mathrm{Log}\Big(\frac{1}{1-z\overline{q}}\Big) - \mathrm{Log}\Big(\frac{1}{1-z\overline{q}e^{-2itu_0}}\Big) = \sum_{k=1}^\infty \frac{(z\overline{q})^{k}}{k}\big(1-e^{-2itku_0}\big),
\end{equation*}
we see that \eqref{eq: theHardyElementOfAS} leads to \eqref{eq: theFourierSeriesOfAS} when we set $z\mapsto re^{ix}$ and let $r\uparrow 1$.
\end{proof}
\section{Proving the main results}\label{sec: provingTheMainResults}
This section is split into three subsections, each devoted to proving Theorem \ref{thm: characterizingTheZeroDispersionLimit}, Lemma \ref{lem: theRaney-typeLemma}, and Theorem \ref{thm: propertiesOfZD}, respectively.
\subsection{Characterizing the zero-dispersion limit}
In this section we prove Theorem \ref{thm: characterizingTheZeroDispersionLimit}. In light of Propositions \ref{prop: existenceOfZDLimit} and \ref{prop: generalizedASFormula}, it remains only to show that $ZD[u_0]=AS[u_0]$ (equivalently $F_{ZD}=F_{AS}$). We do so by comparing the respective Fourier coefficients in the series \eqref{eq: explicitFourierExpansionOfZD} and \eqref{eq: theFourierSeriesOfAS}, which is done by expanding these coefficients in $t$. The latter expansions will only be valid for $u_0\in H^1(\T,\R)$, but Theorem \ref{thm: characterizingTheZeroDispersionLimit} can afterwards be extended to $u_0\in L^\infty(\T,\R)$ by a density argument. 
\begin{proposition}[Series expanding the Fourier coefficients]\label{prop: seriesExpansions}
    Let $k\in \N$, $t\in\R$, and $u_0\in H^1(\T,\R)$. Then we have the following series expansions written in notation introduced below or in Section \ref{sec: preliminaries}:
    \begin{align}
    \label{eq: theHardExpansion}
\Big\langle\big(e^{-2it T_{u_0}}S^*\big)^k \Pi u_0,1\Big\rangle =&\,\sum_{d=0}^\infty\Bigg(\sum_{\substack{m\in  \mathcal{M}_{d,k}\\ n\in \mathcal{N}_{d,k}}}\frac{S(m,n)}{n!}\hat{u}_0(m)\Bigg)(-2it)^{d},\\
    \label{eq: theEasyExpansion}
\Big\langle\frac{1-e^{-2itku_0}}{2itk},q^k\Big\rangle=&\, \sum_{d=0}^\infty\Bigg(\frac{k^d}{(d+1)!}\sum_{m\in \mathcal{M}_{d,k}}\hat{u}_0(m) \Bigg)(-2it)^{d}.
    \end{align}
The two sets $\mathcal{M}_{d,k}$ and $\mathcal{N}_{d,k}$ are defined
\begin{equation*}
        \mathcal{M}_{d,k}\coloneqq  \Big\{m\in \Z^{d+1}\colon {\textstyle\sum_{j=1}^{d+1}m_j=k}\Big\},\qquad \mathcal{N}_{d,k}\coloneqq  \Big\{n\in \N_0^k\colon {\textstyle \sum_{\ell=1}^{k}n_\ell=d}\Big\},
\end{equation*}
and we have employed the multi-index notation 
\begin{equation*}
    \hat{u}_0(m)\coloneqq\hat{u}_0(m_1)\cdots \hat{u}_0(m_{d+1})\qquad n!\coloneqq n_1!\cdots n_{k}!.
\end{equation*}
Moreover, $S(m,n)$ is the following composition of shifts applied to unity 
\begin{equation}\label{eq: theCompositionOfShiftsWrittenEasier}
    S(m,n)= \Bigg[\Bigg(\prod_{j=1}^{N_1}S_{m_j}\Bigg) S^*\Bigg]\Bigg[\Bigg(\prod_{j=N_1+1}^{N_2}S_{m_j}\Bigg) S^*\Bigg]\cdots\Bigg[\Bigg(\prod_{j=N_{k-1}+1}^{N_k}S_{m_j}\Bigg) S^*\Bigg]S_{m_{d+1}}(1),
\end{equation}
where $N_{\ell}\coloneqq n_1+\dots+n_{\ell}$ and $N_{0}\coloneqq 0$, and where $S_y\coloneqq T_{q^y}$ with $q(x)= e^{ix}$. The indexed products in \eqref{eq: theCompositionOfShiftsWrittenEasier} are ordered, $\prod_{j=a}^b c_j\coloneqq c_a\cdots c_b$, with the convention $\prod_{j=a}^bc_j\coloneqq \id$ if $a>b$.
\end{proposition}
\begin{remark}[Interpreting the weight $S(m,n)$]\label{rem: interpretingTheWeight}
From \eqref{eq: theCompositionOfShiftsWrittenEasier} it is not immediately evident that $S(m,n)\in\{0,1\}$, and so we explain this in detail. Each of the $k$ left-shifts that appear in \eqref{eq: theCompositionOfShiftsWrittenEasier} can be written $S^*=S_{-1}$, and so we see that $S(m,n)$ is a large composition of shifts $S_y$ for various values of $y\in\Z$. Moreover, the sum of all these $y$-values is necessarily $m_1+\dots +m_{d+1} - k =0$. By \eqref{eq: explicitCompositionOfToeplitzShifts} from Lemma \ref{lem: compositonsOfToeplitzShifts} it follows that $S(m,n)\in\{0,1\}$. However, determining from $(m,n)$ whether the condition in \eqref{eq: explicitCompositionOfToeplitzShifts} holds---and hence which value $S(m,n)$ takes---is somewhat involved; see Figure~\ref{fig: diagramsExample} for an illustration.
\end{remark}
\begin{proof}[Proof of Proposition \ref{prop: seriesExpansions}]
Replacing $\Pi u_0$ by $T_{u_0}(1)$ on the left-hand side in \eqref{eq: theHardExpansion}, we have the following expansion of the resulting operator  
    \begin{align}\label{eq: simpleExpansionsTowardsHard}
        \Big(e^{-2it T_{u_0}}S^*\Big)^kT_{u_0}= \sum_{n\in \N_0^k} \frac{(-2it)^{d}}{n!}(T_{u_0})^{n_1}S^*\cdots (T_{u_0})^{n_k}S^* T_{u_0},
    \end{align}
where $d\coloneqq n_1+\dots +n_k$ and $n!=n_1!\cdots n_k!$. Since the Fourier series of $u_0$ is absolutely convergent, powers of $T_{u_0}$ are weighted, and absolutely convergent, sums of shift-compositions; for example
\begin{align*}
      (T_{u_0})^{n_1} = \sum_{m\in \Z^{n_1}}\hat{u}_0(m) S_{m_1}\cdots S_{m_{n_1}},
\end{align*}
where, as before, we use the short-hand notation $\hat{u}_0(m)=\hat{u}_0(m_1)\cdots\hat{u}_0(m_{n_1})$.
By similarly expanding $(T_{u_0})^{n_2},\dots,(T_{u_0})^{n_k}$ and $T_{u_0}$ in \eqref{eq: simpleExpansionsTowardsHard}, and then concatenating the $m$-vectors, we obtain the identity
\begin{equation*}
     \Big(e^{-2it T_{u_0}}S^*\Big)^kT_{u_0}(1)= \sum_{n\in \N_0^k} \sum_{m\in \Z^{d+1}}\frac{S(m,n)}{n!}\hat{u}_0(m)(-2it)^{d},
\end{equation*}
where $S(m,n)$ is defined as in \eqref{eq: theCompositionOfShiftsWrittenEasier} (without any restriction on the sum of the entries in $m$). Thus
\begin{align}\label{eq: closeToTheEndOfTheHardExpansion}
   \Big\langle\big(e^{-2it T_{u_0}}S^*\big)^kT_{u_0}(1),1\Big\rangle =\sum_{n\in \N_0^k}\sum_{m\in \Z^{d+1}} \frac{\langle S(m,n),1\rangle}{n!}\hat{u}_0(m)(-2it)^{d}.
\end{align}
Reasoning similarly as in Remark \ref{rem: interpretingTheWeight}, one sees that $S(m,n)\in\{0,q^s\}$ where $s\coloneqq m_1+\dots +m_{d+1} -k$.
Thus, $\langle S(m,n),1\rangle=0$ whenever $s\neq 0$ and so one may in \eqref{eq: closeToTheEndOfTheHardExpansion} restrict $m\in \mathcal{M}_{d,k}$ (that is, when $s=0$). Moreover, since $S(m,n)\in\{0,1\}$ for such $m$, we can replace $\langle S(m,n),1\rangle$ by $S(m,n)$. With these modifications, one obtains \eqref{eq: theHardExpansion} from \eqref{eq: closeToTheEndOfTheHardExpansion}. 
We skip the proof of \eqref{eq: theEasyExpansion} as it is similar to, and easier than, that of \eqref{eq: theHardExpansion}. 
\end{proof}

The only obstacle in comparing the two series \eqref{eq: theEasyExpansion} and \eqref{eq: theHardExpansion} is the cumbersome weight $S(m,n)$. One could hope that simply summing over $n\in \mathcal{N}_{d,k}$ in \eqref{eq: theHardExpansion} resulted in \eqref{eq: theEasyExpansion}. However, it is typically \textit{not} the case that 
\begin{equation*}
    \sum_{n\in \mathcal{N}_{d,k}}\frac{S(m,n)}{n!}=\frac{k^d}{(d+1)!},
\end{equation*}
which can, for example, be seen when $m_{d+1}\leq 0$ in which case $S(m,n)=0$ for any $n\in \N_0^k$. Thus, a different argument is needed.

It turns out that summing $S(m,n)$ over the various cyclic permutations of $m$ and $n$ yields $k$, as demonstrated by the following Lemma \ref{lem: theRaney-typeLemma}. Conveniently, the remaining $(m,n)$-dependent terms in \eqref{eq: theHardExpansion} are invariant under permutations of $m$ and $n$. 

In what follows, $m\in \Z^{d+1}$ and $n\in \N_0^k$ will be endowed with cyclic indexing so that
\begin{equation*}
    m_{r(d+1)+j}= m_j,\qquad
     n_{r k+\ell}= n_\ell,
\end{equation*}
for $r\in \Z$, $j\in \{1,\dots,d+1\}$, and $\ell\in \{1,\dots,k\}$, and $T$ will denote the cyclic shift \begin{equation*}
    (Tm)_j=m_{j+1},\qquad (Tn)_\ell = n_{\ell +1}.
\end{equation*}
\begin{lemma}[A Raney–type lemma]\label{lem: theRaney-typeLemma}
    Let $m\in \Z^{d+1}$ and $n\in \N_0^k$ be such that $m_1+\dots +m_{d+1}=k\geq 1$ and $n_1+\dots +n_{k}=d\geq 0$. Then
    \begin{equation*}
    \sum_{j=1}^{d+1}\sum_{\ell =1}^{k} S(T^jm,T^\ell n)=k,
\end{equation*}
where $S(\cdot,\cdot)$ is as defined in \eqref{eq: theCompositionOfShiftsWrittenEasier}. That is, there are precisely $k$ pairs $(j,\ell)\in\{1,\dots,d+1\}\times \{1,\dots,k\}$ such that $ S(T^jm,T^\ell n)=1$. For the remaining pairs $(j,\ell)$, $S(T^jm,T^\ell n)$ is zero.
\end{lemma}
We prove this lemma in Section \ref{sec: combinatorics} by reformulating it into Lemma \ref{lem: RaneysLemma} known as Raney's lemma\footnote{It is named so in \cite{ConcreteMathematics} and appeared first in \cite{RaneysPaper}.}. To illustrate Lemma \ref{lem: theRaney-typeLemma}, we provide the following example which is related to Figure \ref{fig: diagramsExample}.
\begin{figure}[h]
\centering
    \includegraphics[width=0.9\linewidth, trim=0cm 0cm 0cm 0cm, clip]{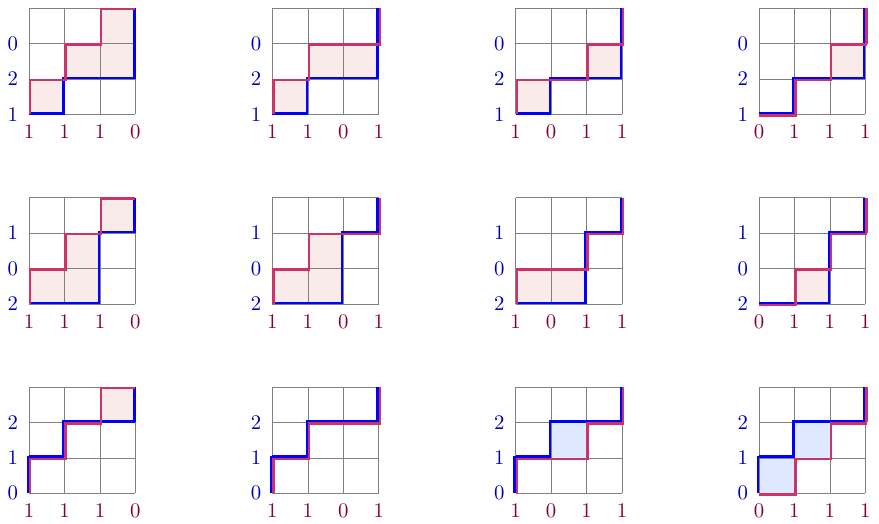}
     \caption{A visual representation of the twelve weights $S(T^j m,T^\ell n)$ resulting from $m=(0,1,1,1)$ and $n=(0,1,2)$. The diagram at column $j$ and row $\ell$ corresponds to $S(T^j m,T^\ell n)$. The red and blue line represents the partial sums of $T^j m$ and $T^\ell n$ respectively; the former grows vertically and the latter horizontally. Moreover, $S(T^j m,T^\ell n)=1$ if the red line is below the blue, and $S(T^j m,T^\ell n)=0$ otherwise, as justified by Lemma \ref{lem: aSimplerCharacterizationOfWhenTheCompositionOfShiftsIsOne}. Here, we see that $S(T^j m,T^\ell n)=1$ when $(j,\ell)\in\{2,3,4\}\times\{3\}$.}
    \label{fig: diagramsExample}
\end{figure}
\begin{remark}[An example of the Raney-type lemma]
    Set $d=k=3$, $m=(0,1,1,1)$ and $n=(0,1,2)$. By Lemma \ref{lem: theRaney-typeLemma}, there are exactly three pairs $(j,\ell)\in\{1,2,3,4\}\times\{1,2,3\}$ such that $S(T^jm,T^\ell n)=1$. From Figure \ref{fig: diagramsExample} we see that these are $(2,3)$, $(3,3)$, and $(4,3)$, which we identify, for simplicity, with $(2,0)$, $(3,0)$, and $(0,0)$, since $T^4m=m$ and $T^3n=n$. Writing the corresponding weights out using \eqref{eq: theCompositionOfShiftsWrittenEasier}, we indeed find
    \begin{align*}
 S(m,n)=S^*S_{m_1}S^*S_{m_2}S_{m_3}S^*S_{m_4}(1)=S_{-1}S_0S_{-1}S_1S_1S_{-1}S_1(1)=1,\\
    S(T^2m,n)=S^*S_{m_3}S^*S_{m_4}S_{m_1}S^*S_{m_2}(1)=S_{-1}S_1S_{-1}S_1S_0S_{-1}S_1(1)=1,\\
    S(T^3m,n)=S^*S_{m_4}S^*S_{m_1}S_{m_2}S^*S_{m_3}(1)=S_{-1}S_1S_{-1}S_0S_1S_{-1}S_1(1)=1.
\end{align*}
where we use $S^*=S_{-1}$ and Lemma \ref{lem: compositonsOfToeplitzShifts}. For any other pair $(j,\ell)\in\{1,2,3,4\}\times\{1,2,3\}$, one would find that $S(T^jm,T^\ell n)=0$. 
\end{remark}
We can now prove our main theorem:
\begin{proof}[Proof of Theorem \ref{thm: characterizingTheZeroDispersionLimit}]
    As explained at the start of the section, we need only prove that $ZD[u_0]$ from Proposition \ref{prop: existenceOfZDLimit} coincides with $AS[u_0]$ from Proposition \ref{prop: generalizedASFormula}. We first do this for $u_0\in H^1(\T,\R)$ by turning the right-hand side of \eqref{eq: theHardExpansion} into that of \eqref{eq: theEasyExpansion} (the $k=0$ case is clear; both the left-hand sides of \eqref{eq: theHardExpansion} and \eqref{eq: theEasyExpansion} become $\langle u_0,1\rangle=\langle \Pi u_0,1\rangle$).
    
   Note that the cyclic shift $T$ acts as a permutation on the sets $\mathcal{M}_{d,k}$ and $\mathcal{N}_{d,k}$. Using this, Lemma \ref{lem: theRaney-typeLemma}, and the multinomial theorem, we can manipulate the bracket on the right-hand side of \eqref{eq: theHardExpansion} as follows
   \begin{align*}
       \sum_{\substack{m\in  \mathcal{M}_{d,k}\\ n\in \mathcal{N}_{d,k}}}S(m,n)\frac{\hat{u}_0(m)}{n!} =&\, \frac{1}{(d+1)k}\sum_{\substack{m\in  \mathcal{M}_{d,k}\\ n\in \mathcal{N}_{d,k}}}\sum_{j=1}^{d+1}\sum_{\ell =1}^{k}S(T^jm,T^\ell n)\frac{\hat{u}_0(T^j m)}{(T^{\ell}n)!}\\
       =&\, \frac{1}{(d+1)k}\sum_{\substack{m\in  \mathcal{M}_{d,k}\\ n\in \mathcal{N}_{d,k}}}\sum_{j=1}^{d+1}\sum_{\ell =1}^{k}S(T^jm,T^\ell n)\frac{\hat{u}_0(m)}{n!}\\
            =&\, \frac{1}{(d+1)}\sum_{n\in \mathcal{N}_{d,k}}\frac{1}{n!}\sum_{m\in  \mathcal{M}_{d,k}}\hat{u}_0(m) = \frac{k^d}{(d+1)!}\sum_{m\in  \mathcal{M}_{d,k}}\hat{u}_0(m).
   \end{align*}
Thus, the two series \eqref{eq: theHardExpansion} and \eqref{eq: theEasyExpansion} agree and Theorem \ref{thm: characterizingTheZeroDispersionLimit} has been proved for $u_0\in H^1(\T,\R)$.

We extend the theorem to $u_0\in L^\infty(\T,\R)$ by a density argument: Pick elements $v_n\in H^1(\T,\R)$ such that $\lim_{n\to\infty} v_n= u_0$ in the $L^2(\T,\R)$ sense. It follows that $T_{v_n}\to T_{u_0}$ strongly on $H^1_+(\T)\subset L^2_+(\T)$ and, arguing as in the proof of Proposition \ref{prop: existenceOfZDLimit}, we get that $ZD[v_n](t)\to ZD[u_0](t)$ weakly in $L^2(\T,\R)$ for all $t\in \R$. For $\varphi\in L^2(\T,\R)$, we then infer
\begin{equation*}
    \langle ZD[u_0](t),\varphi \rangle = \lim_{n\to\infty}\langle ZD[v_n](t),\varphi \rangle=\lim_{n\to\infty}\langle AS[v_n](t),\varphi \rangle=\langle AS[u_0](t),\varphi \rangle,
\end{equation*}
where we used the $L^2$-continuity of $AS[\cdot](t)$ that is proved in Section \ref{sec: propertiesOfTheZeroDispersionLimit}. As $\varphi\in L^2(\T,\R)$ was arbitrary, we conclude that also $ZD[u_0](t)=AS[u_0](t)$.
\end{proof}

\subsection{Proving the Raney-type lemma}\label{sec: combinatorics}
Throughout the subsection, all finite sequences will be endowed with cyclic indexing. For a $\Z$-labeled sequence of real numbers $y=(\dots,y_{-1},y_0,y_1,\dots)$ and $j\in \Z$ we introduce the signed sum
\begin{equation*}
    \Sigma(y,j)\coloneqq \begin{cases}
        y_1+\dots+y_j, & j>0,\\
        -(y_{j+1}+\dots + y_0), & j<0,\\
         0, & j=0,
    \end{cases}
\end{equation*}
which is easily seen to satisfy
\begin{equation}\label{eq: theAlgebraPropertyOfSum}
     \Sigma(y,j_1+j_2)= \Sigma(y,j_1) + \Sigma(T^{j_1}y,j_2),\quad\forall j_1,j_2\in\Z. 
\end{equation}
We will prove Lemma \ref{lem: theRaney-typeLemma} by reducing it to the following Raney’s lemma. 
\begin{lemma}[Raney's lemma, \cite{ConcreteMathematics}]\label{lem: RaneysLemma}
  If $y\in \Z^{J}$ sums to one, that is, $\Sigma(y,J)=1$, then there is a unique cyclic shift $T^r\in\{T,\dots, T^{J}\}$ such that the partial sums of $T^r y$ are all positive: 
  \begin{equation}\label{eq: theRaneyCriterion}
      \Sigma(T^ry,j)>  0,\qquad j\in \{1,\dots,J\}.
  \end{equation}
\end{lemma}
\begin{proof}
     Define the $\R$-valued sequence $x_j\coloneqq y_j- \frac{1}{J}$. Since $y$ is integer valued, we infer two things: First, we have the equivalence $\min_{1\leq j\leq J}\Sigma(T^ry,j)>0$ if, and only if, $\min_{1\leq j\leq J}\Sigma(T^rx,j)\geq 0$. Second, the numbers
$\Sigma(x,j)=\Sigma(y,j)-\frac{j}{J}$
     are all different for $j\in\{1,\dots,J\}$, and so there is a unique $r\in\{1,\dots, J\}$ that minimizes $j\mapsto \Sigma(x,j)$; we claim that $T^r$ is the unique shift that satisfies \eqref{eq: theRaneyCriterion}. To see this, we first infer from \eqref{eq: theAlgebraPropertyOfSum} that
        \begin{equation}\label{eq: partialSumForAShift}
        \Sigma(T^sx,j) =  \Sigma(x,s+j) - \Sigma(x,s),
    \end{equation}
for any $s,j\in\{1,\dots, J\}$. One should note $j\mapsto \Sigma(x,j)$ is $J$-periodic since $x$ sums to zero. Now, if $s=r$ then the right-hand side of \eqref{eq: partialSumForAShift} is nonnegative for all $j\in \{1,\dots,J\}$, and if $s\neq r$ then it is negative for the $j\in \{1,\dots,J\}$ satisfying $s+j=r$ mod$\,J$. By the equivalence established at the start of the proof, we are done.
\end{proof}
Actually, one may view this lemma as a special case of Lemma \ref{lem: theRaney-typeLemma}. Indeed, if $y\in\Z^J$ is as in Lemma \ref{lem: RaneysLemma}, then $m\in \Z^{J}$ and $n\in\N_{0}^{J-1}$, defined $m_j\coloneqq 1-y_j$ and $n\coloneqq (1,1,\dots,1)$, satisfies the assumptions of Lemma \ref{lem: theRaney-typeLemma}. Moreover, it is not hard to verify that $S(m,n)=1$ if, and only if, $y$ satisfies \eqref{eq: theRaneyCriterion}. Then, since $T^\ell n=n$ for all $\ell\in\Z$, one sees that the Lemmas \ref{lem: theRaney-typeLemma} and \ref{lem: RaneysLemma} coincide in this case. This correspondence will be established in greater generality in Lemma \ref{lem: theGoodPropertiesOfTheRaneySequence}.
A first step in this direction is to obtain a more concrete characterization of when $S(m,n)$ is one/zero:
  \begin{lemma}\label{lem: aSimplerCharacterizationOfWhenTheCompositionOfShiftsIsOne}
    Let $d\geq 0$, $k\geq 1$, and $(m,n)\in \mathcal{M}_{d,k}\times \mathcal{N}_{d,k}$, where the latter sets are as in Proposition \ref{prop: seriesExpansions}. Then, we have the equivalence
    \begin{equation}\label{eq: theCharacterizationOfWeightIsOne}
   S(m,n) =1,\qquad\quad \Longleftrightarrow \qquad\quad \Sigma(n,\Sigma (m,j))<j,\,\,\,\forall j\in\{1,\dots,d+1\},
\end{equation}
and otherwise $S(m,n)=0$.
\end{lemma}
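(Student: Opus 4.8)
The plan is to unwind the definition of $S(m,n)$ as a word in the shift operators $S_r = T_{q^r}$ and track what happens when this word is applied to the constant function $1$. The key observation is that $S_r(q^p) = q^{p+r}$ whenever $p+r \geq 0$, and $S_r(q^p) = 0$ whenever $p + r < 0$ (since $\Pi$ kills negative frequencies), while $S^*(q^p) = q^{p-1}$ when $p \geq 1$ and $S^*(1) = 0$. So the word $S(m,n)$ applied to $1$ produces, reading the word from right to left, a sequence of monomials $q^{p_0}, q^{p_1}, q^{p_2}, \dots$ where each step either increments the exponent by some $m_j$ (applying an $S_{m_j}$) or decrements it by $1$ (applying an $S^*$); the whole composition yields $q^0 = 1$ precisely when all intermediate exponents stay $\geq 0$ and the final exponent is $0$, and yields $0$ the moment any intermediate exponent goes negative. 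Since $\sum m_j = k$ and there are exactly $k$ copies of $S^*$ in the word, the final exponent is automatically $m_1 + \dots + m_{d+1} - k = 0$, so only the nonnegativity of the partial exponents is at issue.

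Next I would set up bookkeeping that matches the structure of $S(m,n)$ as written in \eqref{eq: theCompositionOfShifts}: reading right to left, one first applies $S_{m_{d+1}}$, then the block $\bigl(\prod_{j=1+N_{k-1}}^{N_k} S_{m_j}\bigr) S^*$ corresponding to $\ell = k$, and so on down to $\ell = 1$. I would reindex the $S_{m_j}$ factors so that, reading the word right to left, the $m$-increments are applied in the order $m_{d+1}, m_{N_k}, m_{N_k - 1}, \dots$ — careful inspection shows that after relabeling, the exponent after processing a suitable initial segment of the word can be written as a signed combination of partial sums of $m$ and partial sums of $n$. Concretely, the natural parametrization is by $j \in \{1,\dots,d\}$: after applying the first $j$ of the $S^*$'s (interleaved with their accompanying $S_{m_i}$'s), the running exponent equals $\Sigma(m, \Sigma(n,j) + \text{(something)}) - j$ or a close variant; the goal is to show the running exponent is $\geq 0$ at every such checkpoint exactly when $\Sigma(n, \Sigma(m,j)) < j$ for all $j \in \{1,\dots,d\}$ — equivalently $\Sigma(n,\Sigma(m,j)) \leq j-1$. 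The cyclic-indexing conventions for $m$ and $n$ and the identity \eqref{eq: theAlgebraPropertyOfSum} are exactly what is needed to rearrange these nested signed sums cleanly, and one must check that the checkpoints \emph{between} consecutive $S^*$'s (i.e.\ during a block of pure $S_{m_i}$-increments) never drop the exponent lower than at the checkpoints themselves — this follows because within such a block the exponent only changes by the increments $m_i$, and the worst case is controlled by the value right after the preceding $S^*$, though care is needed since individual $m_i$ can be negative.

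The main obstacle I anticipate is precisely this last point: because the $m_i$ are arbitrary integers (not just nonnegative), the running exponent is not monotone within a block of $S_{m_i}$'s, so a naive "it suffices to check after each $S^*$" argument does not literally work. The fix is to realize that the condition "$\langle S(m,n)1,1\rangle = 1$" should be phrased in terms of the finest-grained partial sums — after \emph{each individual} letter of the word — and then to observe that the minimum of these finest partial sums, over a suitable cyclic family, is governed by a single quantity via Raney's lemma; in the statement of Lemma \ref{lem: aSimplerCharacterizationOfWhenTheCompositionOfShiftsIsOne} the index $j$ ranges over $\{1,\dots,d\}$ and the relevant partial sum $\Sigma(m, \Sigma(n,j))$ is a partial sum of $m$ whose length is itself a partial sum of $n$, which is exactly the right "coarsening" that still captures all the constraints because the $S^*$-letters are the only ones that can be immediately fatal (an $S^*$ applied to $1$ gives $0$), whereas an $S_{m_i}$ with $m_i < 0$ applied to $q^p$ with $p > 0$ only gives $0$ if $p + m_i < 0$, and that eventuality is detected at the next $S^*$ checkpoint anyway after accounting for the full block.

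Finally, with the correspondence between word-positions and the signed sums $\Sigma(m,\cdot)$, $\Sigma(n,\cdot)$ pinned down, the equivalence \eqref{eq: theCharacterizationOfWeightIsOne} should drop out by a direct translation: "$q^0 = 1$ survives" $\iff$ "all relevant partial exponents are $\geq 0$" $\iff$ "$\Sigma(n,\Sigma(m,j)) \leq j - 1$ for $j = 1,\dots,d$", using that the total is forced to be $0$. I would present the proof by first establishing the action of $S_r$ and $S^*$ on monomials as a short computation, then introducing the "height function" along the word and proving by induction (or by a direct closed-form using \eqref{eq: theAlgebraPropertyOfSum}) that its value at the $\ell$-th $S^*$-checkpoint is $\Sigma(m,\Sigma(n,\cdot)) - (\cdot)$ as needed, and then reading off both implications of \eqref{eq: theCharacterizationOfWeightIsOne}. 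This lemma then feeds directly into the proof of Lemma \ref{lem: theRaney-typeLemma} via Raney's lemma, since summing the indicator $\langle S(T^j m, T^\ell n)1,1\rangle$ over all cyclic shifts counts the shifts for which the partial-sum condition holds, and Raney's lemma guarantees this count is exactly $k$.
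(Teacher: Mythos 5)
Your opening is the same reduction the paper makes: $S(m,n)$ is a word in the operators $S_r$, one has $S_rq^p=q^{p+r}\mathbbm{1}_{p+r\geq 0}$ for $p\geq 0$, and hence $\langle S(m,n)1,1\rangle\in\{0,1\}$ according to whether every running exponent stays nonnegative (the total being forced to $0$ since $\sum_j m_j=k$ matches the $k$ copies of $S^*$). The gap is in deciding \emph{which} running exponents need checking. You correctly flag that the exponent is not monotone inside a block of $S_{m_i}$'s, but your proposed fix --- that a dip caused by some $m_i<0$ ``is detected at the next $S^*$ checkpoint anyway after accounting for the full block'' --- is false. Take $k=1$, $d=2$, $n=(2)$, $m=(3,-4,2)$: the word is $S_{3}S_{-4}S^*S_{2}$, and applied to $1$ it dies in mid-block because $S_{-4}q^{1}=0$, even though the exponent entering the block is $1$ and the block total is $-1$, so every block-boundary account reads $0\geq 0$ and reports survival. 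The situation is exactly opposite to what you describe: the conditions that must be retained are those after each individual $S_{m_j}$ for $j\in\{1,\dots,d\}$ --- which is why the condition in \eqref{eq: theCharacterizationOfWeightIsOne} is indexed by $j$ up to $d$, not up to $k$ --- while it is the $k$ conditions at the $S^*$'s that are redundant (in the paper's prefix-sum formulation an $S^*$ lowers the prefix sum by $1$, so ``$\leq 0$'' there follows from ``$\leq 0$'' one step earlier). Accordingly, your parametrization ``after applying the first $j$ of the $S^*$'s'' with $j$ ranging over $\{1,\dots,d\}$ is internally inconsistent (there are only $k$ of them), and the nesting $\Sigma(m,\Sigma(n,j))$ you write in two places is the reverse of the one in the statement.

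Beyond that, the real combinatorial content of the lemma is left as ``(something)'' in your write-up: one must convert ``the exponent after the $j$-th $S_{m}$-letter is $\geq 0$'', i.e.\ $\Sigma(m,j)\leq \ell(j)$ where $\ell(j)$ counts the $S^*$'s preceding that letter, into the stated inequality $\Sigma(n,\Sigma(m,j))<j$; the paper does this via the equivalence $N_p<j\iff p\leq\ell(j)$ applied with $p=\Sigma(m,j)$, which in turn uses the signed/cyclic conventions for $\Sigma(n,\cdot)$ at arbitrary integer arguments. Finally, Raney's lemma plays no role in the present lemma --- it enters only afterwards, in Lemma \ref{lem: theRaney-typeLemma}, once this characterization is available --- so invoking it to control ``the minimum of the finest partial sums'' here is misplaced. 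In short: the right starting point, but the decisive coarsening step is wrong as stated and the bookkeeping that actually produces \eqref{eq: theCharacterizationOfWeightIsOne} is missing.
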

\begin{proof}
 By the first part of Lemma \ref{lem: compositonsOfToeplitzShifts}, we infer that the $k$ left shifts $S^*=S_{-1}$ that appear in \eqref{eq: theCompositionOfShiftsWrittenEasier} can be merged with the shifts that follow them; for example, the rightmost left-shift can be merged with $S_{m_{d+1}}$, yielding $S_{m_{d+1}-1}$. In doing so, we get
\begin{equation*}
    S(m,n)=S_{y_1}S_{y_2}\cdots S_{y_{d+1}}(1),\qquad y_j\coloneqq m_j - |\{\ell\in\{1,\dots,k\}\colon \Sigma(n,\ell) = j-1\}|,
\end{equation*}
where $|\cdot|$ denotes cardinality. Now, because $y_1+\dots+y_{d+1}=m_1+\dots+m_{d+1}-k=0$, we get the equivalence $y_1+\dots+y_j\leq 0 \Longleftrightarrow y_{j+1}+\dots+y_{d+1}\geq 0$ for every $j\in\{1,\dots,d+1\}$. With this in mind, we use Lemma \ref{lem: compositonsOfToeplitzShifts} to conclude that $ S(m,n)=1$ if
\begin{equation}\label{eq: theFirstTrivialCondition}
y_1+\dots+y_j\leq 0,\qquad\forall j\in\{1,\dots,d+1\},
\end{equation}
and $S(m,n)=0$ else. Inserting for the $y$-values in \eqref{eq: theFirstTrivialCondition} we find that the condition is equivalent to
\begin{equation}\label{eq: theSecondTrivialConditon}
     \Sigma(m,j)\leq \max\{\ell\in\{0,\dots,k\}\colon \Sigma(n,\ell) < j\},\qquad\forall j\in\{1,\dots,d+1\},
\end{equation}
where the use of $\max\{\cdot\}$ is appropriate since $\ell\mapsto \Sigma(n,\ell)$ is nondecreasing. The restriction $\ell\in\{0,\dots,k\}$ can be relaxed to $\ell\in\Z$; that is, we claim that \eqref{eq: theSecondTrivialConditon} is equivalent to
\begin{equation}\label{eq: theThirdTrivialConditon}
     \Sigma(m,j)\leq \max\{\ell\in\Z\colon \Sigma(n,\ell) < j\},\qquad \forall j\in\{1,\dots,d+1\}.
\end{equation}
Indeed, if $j=d+1$ then both conditions are trivially satisfied since $\Sigma(m,d+1)=k$ and $\Sigma(n,k)=d$, and if $j\in\{1,\dots,d\}$ then the right-hand sides of \eqref{eq: theSecondTrivialConditon} and \eqref{eq: theThirdTrivialConditon} coincide. Finally, observe that 
\begin{equation}\label{eq: theObviousEquivalence}
\Sigma(n,p)<j \quad \Longleftrightarrow\quad p\leq \max\{\ell\in\Z\colon \Sigma(n,\ell) < j\},
\end{equation}
for all $p,j\in \Z$, since $\ell\mapsto \Sigma(n,\ell)$ is nondecreasing. Setting $p=\Sigma(m,j)$, \eqref{eq: theObviousEquivalence} shows that the condition \eqref{eq: theThirdTrivialConditon} is equivalent to the right-hand side of \eqref{eq: theCharacterizationOfWeightIsOne}. This concludes the proof.
\end{proof}
Before proving Lemma \ref{lem: theRaney-typeLemma}, we will show how to convert it into the framework of Raney's lemma: For $(m,n)\in \mathcal{M}_{d,k}\times \mathcal{N}_{d,k}$ we associate a vector $y=y(m,n)\in\Z^{d+1}$, endowed with cyclic indexing and defined by
    \begin{equation}\label{eq: theVectorAssociatedToMN}
        y(m,n)_j= 1 + \Sigma(n,\Sigma(m,j-1))-\Sigma(n,\Sigma(m,j)),\quad j\in\Z.
    \end{equation}
We stress that the right-hand side of \eqref{eq: theVectorAssociatedToMN} is $(d+1)$-periodic in $j$ (the definition is therefore compatible with the cyclic indexing of $y$); this is easily seen from \eqref{eq: theAlgebraPropertyOfSum} if one exploits that $T^{d+1}m=m$, $T^kn=n$, $\Sigma(m,d+1)=k$, and $\Sigma(n,k)=d$. This vector has various good properties:
\begin{lemma}\label{lem: theGoodPropertiesOfTheRaneySequence}
    For $(m,n)\in \mathcal{M}_{d,k}\times \mathcal{N}_{d,k}$ let $y=y(m,n)\in \Z^{d+1}$ be as in \eqref{eq: theVectorAssociatedToMN}. Then:
    \begin{enumerate}[label=\roman*)]
    \item The vector $y$ sums to one, $y_1+\dots+ y_{d+1}=1$.
    \item The weight $S(m,n)$ is one (and otherwise zero) if, and only if, the associated vector has positive partial sums, $y_1+\dots+y_j>0$ for all $j\in\{1,\dots,d+1\}$.
    \item A cyclic shift of $y$ is related to cyclic shifts of $(m,n)$ through the formula 
    \begin{equation}\label{eq: theSingleShiftRelation}
        T\big(y(m,n)\big)=y(Tm,T^{m_1}n).
    \end{equation}
\end{enumerate}
\end{lemma}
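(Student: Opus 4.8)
The plan is to verify the three claims in the order (i), (iii), (ii), since (iii) is purely algebraic, (i) follows by telescoping, and (ii) is just a translation of Lemma \ref{lem: aSimplerCharacterizationOfWhenTheCompositionOfShiftsIsOne} into the new notation.

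\emph{Claim (i).} The defining formula \eqref{eq: theVectorAssociatedToMN} exhibits $y_j$ as a difference of consecutive values of the function $j\mapsto \Sigma(n,\Sigma(m,j))$ plus the constant $1$. Hence
\begin{equation*}
    \sum_{j=1}^{d+1}y_j = (d+1) + \Sigma(n,\Sigma(m,0)) - \Sigma(n,\Sigma(m,d+1)) = (d+1) + \Sigma(n,0) - \Sigma(n,k) = (d+1) + 0 - d = 1,
\end{equation*}
using $\Sigma(m,0)=0$, $\Sigma(m,d+1)=k$ and $\Sigma(n,k)=d$ (these are exactly the hypotheses $m\in X_{d,k}$, $n\in Y_{d,k}$). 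More generally, the same telescoping gives the partial-sum identity
\begin{equation}\label{eq: telescopingPartialSums}
    y_1+\dots+y_j = j - \Sigma(n,\Sigma(m,j)),\qquad j\in\{0,1,\dots,d+1\},
\end{equation}
which will be the key link to the other two parts.

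\emph{Claim (ii).} By \eqref{eq: telescopingPartialSums}, the inequality $y_1+\dots+y_j>0$ is equivalent to $\Sigma(n,\Sigma(m,j))<j$. Thus ``$y_1+\dots+y_j>0$ for all $j\in\{1,\dots,d\}$'' is literally the right-hand side of \eqref{eq: theCharacterizationOfWeightIsOne} in Lemma \ref{lem: aSimplerCharacterizationOfWhenTheCompositionOfShiftsIsOne}, which in turn is equivalent to $\langle S(m,n)1,1\rangle=1$ (and otherwise the weight is $0$). This proves (ii).

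\emph{Claim (iii).} This is the part I expect to require the most bookkeeping. The cyclic shift $T$ acts on $y(m,n)$ by $(Ty)_j = y_{j+1}$, so from \eqref{eq: theVectorAssociatedToMN},
\begin{equation*}
    (Ty)_j = 1 + \Sigma(n,\Sigma(m,j)) - \Sigma(n,\Sigma(m,j+1)).
\end{equation*}
On the other hand, I must compute $y(Tm,T^{m_1}n)_j = 1 + \Sigma(T^{m_1}n,\Sigma(Tm,j-1)) - \Sigma(T^{m_1}n,\Sigma(Tm,j))$. Using the cocycle identity \eqref{eq: theAlgebraPropertyOfSum} in the form $\Sigma(Tm,i) = \Sigma(m,i+1) - \Sigma(m,1) = \Sigma(m,i+1) - m_1$, and then again $\Sigma(T^{m_1}n,p) = \Sigma(n,p+m_1) - \Sigma(n,m_1)$, one finds
\begin{equation*}
    \Sigma(T^{m_1}n,\Sigma(Tm,i)) = \Sigma\big(n,\,\Sigma(m,i+1)-m_1+m_1\big) - \Sigma(n,m_1) = \Sigma(n,\Sigma(m,i+1)) - \Sigma(n,m_1).
\end{equation*}
Substituting $i=j-1$ and $i=j$ and taking the difference, the constant $\Sigma(n,m_1)$ cancels and we obtain exactly $(Ty)_j$. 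Hence $T(y(m,n)) = y(Tm,T^{m_1}n)$, which is \eqref{eq: theSingleShiftRelation}. (One should double-check that the cyclic indexing of $n$ is consistent here, i.e.\ that $\Sigma(m,1)=m_1$ and that $T^{m_1}n$ is interpreted via the cyclic extension $n_{rk+\ell}=n_\ell$; this is the only subtle point, and it is precisely what the paragraph preceding the lemma sets up.)

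The main obstacle is purely notational: carefully tracking how the two cocycle relations \eqref{eq: theAlgebraPropertyOfSum}—one for $m$ shifted by $1$, one for $n$ shifted by $m_1$—compose, and confirming that the shift amount on $n$ is $m_1$ rather than something else. Once the substitution is organized so that the correction terms telescope away, all three statements fall out immediately from \eqref{eq: telescopingPartialSums} and Lemma \ref{lem: aSimplerCharacterizationOfWhenTheCompositionOfShiftsIsOne}.
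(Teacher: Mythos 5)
Your proposal is correct and follows essentially the same route as the paper: the telescoping identity $y_1+\dots+y_j=j-\Sigma(n,\Sigma(m,j))$ for (i) and (ii), the reduction of (ii) to Lemma \ref{lem: aSimplerCharacterizationOfWhenTheCompositionOfShiftsIsOne}, and the double application of the cocycle identity \eqref{eq: theAlgebraPropertyOfSum} for (iii) all match the paper's argument (the paper merely runs the computation in (iii) starting from $(Ty)_j$ rather than from $y(Tm,T^{m_1}n)_j$). Your parenthetical caution about interpreting $T^{m_1}n$ via the cyclic indexing, since $m_1$ may be negative, is exactly the right point to flag and is covered by the setup preceding the lemma.
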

\begin{proof}
Since $\Sigma(n,\Sigma(m,0))=0$, we have
\begin{equation*}
    y_1+\dots+y_j = j - \Sigma(n,\Sigma(m,j)),\qquad j\geq 1.
\end{equation*}
Thus $i)$ follows because $\Sigma(n,\Sigma(m,d+1))=\Sigma(n,k)=d$, while $ii)$ follows from Lemma \ref{lem: aSimplerCharacterizationOfWhenTheCompositionOfShiftsIsOne}. 
 We get $iii)$ by using \eqref{eq: theAlgebraPropertyOfSum}:
\begin{align*}
    (Ty)_j =&\,1 + \Sigma\Big(n,\Sigma(m,j)\Big)-\Sigma\Big(n,\Sigma(m,j+1)\Big)\\
    =&\,1 + \Sigma\Big(n,m_1 + \Sigma(Tm_1,j-1)\Big)-\Sigma\Big(n,m_1+\Sigma(Tm,j)\Big)\\
    =&\,1 +\Sigma(n,m_1) +\Sigma\Big(T^{m_1}n,\Sigma(Tm_1,j-1)\Big)-\Sigma(n,m_1) -\Sigma\Big(T^{m_1}n,\Sigma(Tm,j)\Big)\\
    =&\, y(Tm,T^{m_1}n)_j.\qedhere
\end{align*}
\end{proof}
We are finally sufficiently equipped to prove the Raney-type lemma.
\begin{proof}[Proof of Lemma \ref{lem: theRaney-typeLemma}]
On $\mathcal{M}_{d,k}\times \mathcal{N}_{d,k}$ we define the permutation $\sigma(m,n)\coloneqq (T m,T^{m_1}n)$, and so
\begin{equation*}
   \sigma^j(m,T^\ell n)= (T^jm,T^{\ell+s(j)} n),\qquad j,\ell\in\Z,
\end{equation*}
where $s(j)\coloneqq\Sigma(m,j)$. Because $\{Tn, \dots ,T^kn\}=\{T^{1+s(j)}n, \dots ,T^{k+s(j)}n\}$, it follows that 
\begin{equation}\label{eq: rewritingTheWeightSumInTermsOfThePerturbation}
   \sum_{\ell=1}^kS(T^jm,T^\ell n)= \sum_{\ell=1}^kS(T^jm,T^{\ell+s(j)} n)=\sum_{\ell=1}^kS\big(\sigma^j(m,T^{\ell} n)\big),
\end{equation}
for any $j\in\Z$.
Next, in light of Raney's lemma (Lemma \ref{lem: RaneysLemma}) and $i)$ from Lemma \ref{lem: theGoodPropertiesOfTheRaneySequence}, we infer for $y=y(m,T^\ell n)$ that there is precisely one $r\in\{1,\dots,d+1\}$ such that $T^ry$ has positive partial sums. And by $ii)$ and $iii)$ in Lemma \ref{lem: theGoodPropertiesOfTheRaneySequence} it follows that 
\begin{equation*}
     S\big(\sigma^j(m,T^\ell n)\big)=\delta_{jr}\qquad \implies \qquad \sum_{j=1}^{d+1}S\big(\sigma^j(m,T^\ell n)\big)=1.
\end{equation*}
The lemma follows by summing \eqref{eq: rewritingTheWeightSumInTermsOfThePerturbation} over $j\in\{1,\dots,d+1\}$ and interchanging the order of the sums.
\end{proof}

\subsection{Properties of the zero-dispersion limit}\label{sec: propertiesOfTheZeroDispersionLimit}
In this section we prove Theorem \ref{thm: propertiesOfZD}; the proof will rely solely on properties of the mapping $AS[\cdot]$ introduced earlier, but which we define again here in equivalent, but more convenient, notation: For $v\in L^1(\T,\R)$, $t\in\R$, and a.e.~$x\in\T$, we set
\begin{equation}\label{eq: newDefinitionsOfAS}
    AS[v](t,x)\coloneqq \int_{0}^\infty f_v(x-2ty,y)\, dy,\qquad f_v(x,y)\coloneqq \mathbbm{1}_{v(x)>y>0} -\mathbbm{1}_{v(x)<-y<0}.
\end{equation}
\begin{proof}[Proof of Theorem \ref{thm: propertiesOfZD}]
We prove the maximum principle first as it will be useful later.

\noindent\textit{Maximum principles:} For $v\in L^\infty(\T)$ it is obvious from definition \eqref{eq: newDefinitionsOfAS} that
\begin{equation*}
    -\mathbbm{1}_{\mathrm{ess}\inf v<-y<0}\leq f_v(x,y)\leq \mathbbm{1}_{\mathrm{ess}\sup v>y>0},
\end{equation*}
and so the maximum principle follows. In particular one gets $\|AS[v](t)\|_{L^\infty}\leq \|v\|_{L^\infty}$.

\noindent\textit{Joint time- and $L^p$-continuity:} Let $p\in[1,\infty)$, $v,w\in L^p(\T,\R)$,  $t\in \R$, and let $[h]^p\coloneqq h|h|^{p-1}$ denote the signed power. Using a layer cake representation, one finds that 
\begin{align*}
       \int_0^\infty py^{p-1}|(f_v-f_w)(x,y)|\, dy =&\, \big|[v]^{p}-[w]^{p}\big|(x),
\end{align*}
Using this identity, the bound $\|f_v-f_w\|_{L^\infty}\leq 2$, and Lemma \ref{lem: weightedInequality}, we have
 \begin{equation}\label{eq: longComputation}
 \begin{split}
        \|\big(AS[v]-AS[w]\big)(t)\|_{L^p(\T)}^p \leq &\,\|\,\|(f_v-f_w)(x-2ty,y)\|_{L^1_y(\R_+)}\|_{L^p_x(\T)}^p \\
        \leq&\, 2^{p-1}\int_\T\int_{0}^\infty  py^{p-1}|(f_v-f_w)(x-2ty,y)|\, dy\,dx\\
        =&\, 2^{p-1}\int_\T\int_{0}^\infty py^{p-1}|(f_v-f_w)(x,y)|\, dy\,dx\\
        =&\, 2^{p-1}\int_{\T}\big|[v]^{p}-[w]^{p}\big|\, dx\\
        \leq &\, 2^{p-1}\int_{\T}p\big(|v|+ |w|\big)^{p-1}|v-w|\, dx\\
        \leq&\, p2^{p-1}\big(\|v\|_{L^p(\T)}+\|w\|_{L^p(\T)}\big)^{p-1}\|v-w\|_{L^p(\T)}.     
 \end{split}
    \end{equation}
That is, $v\mapsto AS[v](t)$ is locally uniformly $\frac{1}{p}$-Hölder continuous on $L^p(\T)$ with a $t$-independent coefficient. Let $v_n\to v\in L^p(\T)$ and $t_n\to t\in\R$. Using \eqref{eq: longComputation} and the triangle inequality we get 
\begin{align*}
    \limsup_{n\to\infty} \|AS[v_n](t_n)-AS[v](t)\|_{L^p(\T)}\leq&\,   \limsup_{n\to\infty}\|AS[v](t_n)-AS[v](t)\|_{L^p(\T)}\\
    \leq&\,\lim_{\varepsilon\to 0}\limsup_{n\to\infty}\|AS[v^\varepsilon](t_n)-AS[v^{\varepsilon}](t)\|_{L^p(\T)},
\end{align*}
where $v^\varepsilon\in C^1(\T)$ is such that $\|v-v^{\varepsilon}\|_{L^p(\T)}<\varepsilon$. Thus, it remains to show that  $AS[\varphi]\in C(\R, L^p(\T))$ for any $\varphi\in C^1(\T)$: For this, note first that $|f_\varphi(x,y)-f_{\varphi}(z,y)| =0$, if either  $|y|>\|\varphi\|_{L^\infty}$ or if $|y-\varphi(x)|>|x-z|\|\varphi'\|_{L^\infty}$. Using both of these two observations, and setting $z=x+\delta y$, it holds that
\begin{align*}
    |f_\varphi(x,y)-f_{\varphi}(x+\delta y,y)|&\, =0,\text{ if } |y-\varphi(x)|>|\delta|\|\varphi\|_{L^\infty}\|\varphi'\|_{L^\infty},\\
    \implies \int_{0}^\infty|f_\varphi(x,y)-f_{\varphi}(x+\delta y,y)|&\,\,dy\leq |\delta|\|\varphi\|_{L^\infty}\|\varphi'\|_{L^\infty}.
\end{align*}
Picking $s,t\in \R$, we get the desired time-continuity 
\begin{align*}
    \|AS[\varphi](t)-AS[\varphi](s)\|_{L^p(\T)}^p\leq &\,(2\|\varphi\|_{L^\infty})^{p-1}  \|AS[\varphi](t)-AS[\varphi](s)\|_{L^1(\T)} \\
        \leq&\, (2\|\varphi\|_{L^\infty})^{p-1} \int_{\T}\int_{0}^\infty|f_\varphi(x-2ty,y)-f_\varphi(x-2sy,y)|\,dy\, dx\\
       =&\, (2\|\varphi\|_{L^\infty})^{p-1} \int_{\T}\int_{0}^\infty|f_\varphi(x,y)-f_\varphi(x+2(t-s)y,y)|\,dy\, dx\\
      \leq&\, 2\pi(2\|\varphi\|_{L^\infty})^{p} \|\varphi'\|_{L^\infty}|t-s|.
\end{align*}

\noindent\textit{Norm control and $L^1$ contractions:}  The control of the $L^\infty$ norm was demonstrated at the beginning. The $p\in[1,\infty)$ case is dealt with using Lemma \ref{lem: weightedInequality}:
\begin{equation}\label{eq: theLpControlComputation}
    \|AS[v](t)\|_{L^p(\T)}\leq \int_{\T}\int_0^\infty py^{p-1}|f_v(x-2ty,y)|\, dy\,dx=\int_{\T}\int_0^\infty py^{p-1}|f_v(x,y)|\, dy\,dx=\|v\|_{L^p(\T)}.
\end{equation}
The $L^1$-contractions are obtained from \eqref{eq: longComputation} when setting $p=1$.

\noindent\textit{Oleinik estimate:} It will here be convenient to use notation more akin to the original one for $AS$, namely $AS[v](t,x)=\int_{\R}\chi_v(x-2ty,y)\,dy$ where $\chi_v(x,y)\coloneqq \mathbbm{1}_{v(x)>y>0} -\mathbbm{1}_{v(x)<y<0}$. This is because of the pointwise bound $\chi_v(x,y+h)\leq \chi_v(x,y) + \mathbbm{1}_{0>y>-h}$, which is valid for $h>0$, $x\in \T$, and $y\in\R$. Suppose $t>0$, then
\begin{align*}
    AS[v](t,x+h)=&\, \int_\R \chi_v(x+h-2ty,y)  \, dy=\int_\R \chi_v\big(x -2ty,y+\tfrac{h}{2t}\big)  \, dy\\ \leq &\,     \int_{\R}\chi_v(x-2ty,y) +\mathbbm{1}_{0>y>-h/2t}\,dy
     = AS[v](t,x) + \frac{h}{2t}.
\end{align*}
The remaining cases, $h,t<0$, $h<0<t$, and $t<0<h$, can be dealt with similarly; some of the inequalities will flip, but in all cases the resulting estimate can be written as in Theorem \ref{thm: propertiesOfZD}. 

\noindent\textit{Strong convergence before breaking:} We need only prove that $\|AS[u_0](t)\|_{L^2(\T)}=\|u_0\|_{L^2(\T)}$ for $t\in[T_-,T_+]$ since weak convergence plus convergence of norms implies strong convergence in $L^2(\T)$. For fixed $t\in (T_-,T_+)$ and $x\in \T$, note that there is a unique solution $y_*$ to the equation $0=h(y)\coloneqq y-u_0(x-2ty)$ since $h'(y)>1$ with $h(\pm\infty)=\infty$. As a consequence, we get that $y\mapsto f_v(x-2ty,y)$ is of the form $y\mapsto \mathrm{sgn}(y_*)\mathbbm{1}_{0<y<|y_*|}$. This further implies that $|\int_{0}^\infty f_v(x-2ty,y)\, dy| = \int_{0}^\infty |f_v(x-2ty,y)|\, dy$ and that the bound of Lemma \ref{lem: weightedInequality} is in fact an equality when applied to $y\mapsto f(x-2ty,y)$. Thus, the inequality in \eqref{eq: theLpControlComputation} becomes an equality for all $p\in[1,\infty)$; in particular for $p=2$. By the $L^2$-continuity of $AS[\cdot]$, this extends up to the endpoints $t\in\{T_-,T_+\}$ as well. 

\noindent\textit{Entropy solution from Trotter's formula:} The mapping $(t,u_0)\mapsto ZD[u_0](t)$ coincides with the transport-collapse map $(u_0,t)\mapsto T(t)u_0$ in \cite{Brenier} when the flux-function is defined $B(u)=u^2$. Strictly speaking, it is assumed in \cite{Brenier} that $B$ is globally Lipschitz, $x\in \R$, and $u_0\in L^1(\R)$. However if $u_0$ is bounded, then by maximum principles and finite speed of propagation (both of the map $T$ and of the entropy solution $u_{\mathrm{ent}}$), the result in \cite{Brenier} carries over to our setting. In particular, we get from \cite[Theorem 1]{Brenier} that
\begin{equation}
    u_{\mathrm{ent}}(t) =\lim_{n\to\infty}T(\tfrac{t}{n})^{\circ n}u_0,\quad \text{in }L^1(\T),
\end{equation}
 for all $t\geq 0$, and $u_0\in L^\infty(\T)$. The extension to $u_0\in L^1(\T)$ follows by density when using the $L^1$-contractions.
\end{proof}
\section*{Acknowledgments}
The author is grateful to the anonymous reviewers for valuable suggestions that improved the manuscript and would like to thank P. Gérard, J-C. Saut, P. Miller, X. Chen, and L. Gassot for helpful discussions during the writing of this paper.
The author is supported by the French Agence Nationale de la Recherche (ANR) under grant number ANR-23-CE40-0015 (ISAAC).

\appendix
\section{Auxiliary lemmas}
The following lemma is a justification of the identity \eqref{eq: theGeneralizedMillerXuFormula}.
\begin{lemma}\label{lem: theObviousLemma}
    Let $u_0\in C^1(\T,\R)$, and let $(t,x)$ denote points in $\in\R\times \T$. Define the multivalued solution of the Burgers equation \eqref{eq: burgersEquation} and the set of caustics (the envelopes of the characteristics) by
    \begin{align*}
        u_B(t,x)\coloneqq&\, \{y\colon y=u_0(x-2ty)\}, & \mathcal{C}\coloneqq&\, \big\{(t,x)\colon \exists z\in\R, \,g_t(z)=x,\, g'_t(z)=0\big\},
    \end{align*}
where $g_t\in C^1(\R,\T)$ is defined $g_t(z)=z+2tu_0(z)$.
Then $\mathcal{C}$ is a Lebesgue null set. Moreover, if $(t,x)\notin \mathcal{C}$, then $u_B(t,x)$ contains an odd number of values $u_B^0(t,x)<\dots<u_B^{N}(t,x)$ satisfying 
\begin{equation*}
       \sum_{n=0}^{N}(-1)^nu^n_B(t,x) = \int_{\R}\chi(u_0(x-2ty),y)\, dy,\quad \text{where }\chi(u,y)\coloneqq \mathbbm{1}_{u>y>0} -\mathbbm{1}_{u<y<0},\quad u,y\in\R.
\end{equation*}
\end{lemma}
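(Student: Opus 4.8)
The plan is to analyze the level structure of the map $g_t(z) = z + 2tu_0(z)$ and relate its preimages to the integral $\int_\R \chi_0(x-2ty,y)\,dy$. First I would dispose of the null set of caustics: for fixed $t$, the set of $z$ with $g_t'(z)=0$ is the zero set of the continuous function $z\mapsto 1+2tu_0'(z)$, and Sard's theorem (or a direct Fubini argument over $(t,z)$, using that $g_t'(z)=0$ forces $1 = -2tu_0'(z)$, a codimension-one condition) shows $\mathcal{C}$ is Lebesgue null in $\R\times\T$. Then, for $(t,x)\notin\mathcal{C}$, the equation $g_t(z)=x$ has finitely many solutions $z_0<z_1<\dots<z_N$ (finiteness from $C^1$-ness and compactness of $\T$, plus that none is a critical point so they are isolated), all transverse, and at consecutive solutions the sign of $g_t'$ alternates because $g_t - x$ changes sign at each simple zero; moreover at the ``first'' solution $g_t'>0$ since $g_t$ is a degree-one map of the circle (it must cross upward more often than downward). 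Hence $N$ is even, i.e. there are an odd number $N+1$ of solutions, and $u_B^n(t,x) = u_0(z_n)$ gives the values in $u_B(t,x)$ — though one must check these are \emph{distinct and ordered}, which follows since $y = u_0(x-2ty) = u_0(z_n)$ means $y_n := u_0(z_n)$ and $z_n = x - 2ty_n$, so $z_n$ is strictly increasing iff $y_n$ is strictly decreasing when $t<0$ and strictly increasing when $t>0$; either way the $y_n$'s are distinct and can be relabeled in increasing order, with the alternation of the $g_t'$-sign translating into the alternation $(-1)^n$ after relabeling.

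The core identity is then a change of variables. I would write
\begin{equation*}
  \int_\R \chi_0(x-2ty,y)\,dy = \int_{\{y>0\}} \mathbbm{1}_{u_0(x-2ty)>y}\,dy - \int_{\{y<0\}}\mathbbm{1}_{u_0(x-2ty)<y}\,dy,
\end{equation*}
and observe that, since $h(y):= y - u_0(x-2ty)$ satisfies $h'(y) = 1 + 2tu_0'(x-2ty)\,$ (which is $g_t'$ evaluated at $x-2ty$, nonzero off $\mathcal{C}$), the sign of $\chi_0(x-2ty,y)$ as a function of $y\in\R$ is governed by the sign changes of $h$, which occur precisely at the points $y_n$ with $x - 2ty_n = z_n$, i.e. $y_n = u_0(z_n)$. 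Ordering the $y_n$ increasingly, $h$ is negative-then-positive-then-negative-$\dots$ (or the reverse) on the complementary intervals, so $\mathbbm{1}_{h<0}$ (which is $\mathbbm{1}_{u_0(x-2ty)>y}$) is the indicator of an alternating union of intervals $(-\infty,y_0)\cup(y_1,y_2)\cup\dots$; integrating against the appropriate signs and using that contributions from $y<0$ and $y>0$ fit together consistently (the constant of integration is pinned down because $\chi_0$ has compact support in $y$, vanishing for $|y|>\|u_0\|_\infty$), the telescoping sum $\sum_n (-1)^n y_n$ emerges. This is essentially the ``area under the graph'' picture of Figure~\ref{fig: multivaluedBurgers} made rigorous.

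A cleaner route for the core step, which I would likely prefer to present: fix $(t,x)\notin\mathcal{C}$ and consider $\Phi(y) := \chi_0(x-2ty,y)$; since $\Phi$ is a $\{0,\pm1\}$-valued function with finitely many jumps located at the $y_n$, it suffices to compute $\int_\R \Phi = \sum (\text{value on interval})\times(\text{length})$, then regroup. Equivalently, integrate by parts: $\int_\R \Phi(y)\,dy = -\int_\R y\,\Phi'(y)\,dy$ (no boundary terms by compact support), and $\Phi'$ is a signed sum of Dirac masses at the $y_n$ with weights $\pm1$ alternating and starting/ending correctly because $\Phi$ returns to $0$ at $\pm\infty$ — this immediately yields $\sum_{n=0}^N (-1)^n y_n$ up to identifying the sign convention, which is fixed by noting $\Phi = +1$ just above $y=0$ when $u_0(x) > 0$ near the relevant branch, consistent with the $(-1)^0$ term.

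\textbf{Main obstacle.} The routine analytic parts (null set, finiteness, transversality) are standard; the real bookkeeping lies in matching the combinatorial sign conventions — showing that after relabeling the roots $z_n$ into increasing \emph{$u_0$-order} $y_n$, the alternating signs from the jumps of $\Phi$ coincide with the $(-1)^n$ in the claimed formula, and simultaneously that the count $N+1$ is odd. I expect this to hinge on a parity/degree argument for the circle map $g_t$: as $z$ traverses $\T$ once, $g_t(z)$ winds around $\T$ exactly once, so the algebraic number of solutions of $g_t(z)=x$ counted with the sign of $g_t'$ is $1$, forcing $(\#\text{positive crossings}) - (\#\text{negative crossings}) = 1$ and hence an odd total. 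Tracking this sign data consistently through the substitution $z = x - 2ty$ (where increasing $z$ corresponds to increasing $y$ for $t>0$ but decreasing $y$ for $t<0$) is the step most prone to sign errors and deserves the most care; everything else is a direct computation.
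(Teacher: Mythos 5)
Your proposal is correct and is essentially the paper's own argument: off the caustic set (shown null by applying Sard's theorem to each fiber $g_t$ and then Fubini), the function $h(y)=y-u_0(x-2ty)$ has only simple roots (a double root would place $(t,x)\in\mathcal{C}$ via $z=x-2ty_*$), hence finitely many, odd in number since $h(\pm\infty)=\pm\infty$, and these roots are exactly the jump points of the step function $y\mapsto\chi_0(x-2ty,y)$, whose integral then telescopes to the alternating sum --- the paper implements this last step by writing $y_n=\int_\R(\mathbbm{1}_{y_n>y>0}-\mathbbm{1}_{y_n<y<0})\,dy$ and identifying $\sum_n(-1)^n\mathbbm{1}_{y_n>y>0}$ with $\mathbbm{1}_{u_0(x-2ty)>y>0}$ using that $N$ is even, which is the same bookkeeping as your integration by parts. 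The one concrete slip is in your monotonicity parenthetical: since $y_n=(x-z_n)/(2t)$, increasing $z_n$ corresponds to \emph{decreasing} $y_n$ for $t>0$ (you have the two cases interchanged), but this is harmless because an alternating sum of odd length is invariant under reversal of the ordering, and your $h$-based ``cleaner route'' sidesteps the $z$-parametrization (and its need for the lift of $g_t$ to $\R$) altogether.
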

\begin{proof}
    Since $u_0,u_0'$ are continuous, $\mathcal{C}$ is a closed set and therefore measurable. Thus, $\mathcal{C}$ is a null set in $\R\times \T$ because each fiber $\mathcal{C}_t=\{x\colon (t,x)\in \mathcal{C}\}$, which is precisely the set of critical values of $g_t$, is a null set in $\T$ by Sard's theorem.
   
   Next, fix $(t,x)\notin \mathcal{C}$, then any root $y_*$ of the function $h\in C^1(\R,\R)$, defined
$       h(y)\coloneqq y-u_0(x-2ty)$,
   must be simple, because if $h(y_*)=h'(y_*)=0$ then $(t,x)\in \mathcal{C}$ with corresponding $z=x-2ty_*$. Furthermore, since $\lim_{y\to\pm\infty}h(y)=\pm\infty$, it follows that $h$ has an odd number of roots that we label $y_0<\dots<y_N$ with $N\in 2\N_0$; these roots make up the set $u_B(t,x)$. Using the trivial identity $y_n = \int_{\R}\mathbbm{1}_{y_n>y>0}-\mathbbm{1}_{y_n<y<0}\, dy$ we get that
   \begin{equation}\label{eq: theIntegralIdentityPrematurely}
         \sum_{n=0}^{N}(-1)^ny_n =   \int_{\R}\sum_{n=0}^{N}(-1)^n\mathbbm{1}_{y_n>y>0}\, dy - \int_{\R}\sum_{n=0}^{N}(-1)^n\mathbbm{1}_{y_n<y<0}\, dy.
   \end{equation}
Since $N$ is even, one sees that the integrand of the first integral is equal to one (and otherwise zero) precisely when both $y>0$ and an odd number of the roots $y_0,\dots,y_N$ exceed $y$; the latter condition coincides with the condition $0>h(y)$ (the roots are simple and $h(\infty)=\infty$) which is to say that $u_0(x-2ty)>y$. Thus, $
    \sum_{n=0}^{N}(-1)^n\mathbbm{1}_{y_n>y>0} =\mathbbm{1}_{u_0(x-2ty)>y>0}$.
Reformulating the second integrand in \eqref{eq: theIntegralIdentityPrematurely} similarly, one gets the desired identity of the lemma.
\end{proof}
\begin{remark}[Breakdown at the caustics]\label{rem: failureAtCaustics}
    If $u_0$ is analytic, then every root of $h$ from the previous proof has a well defined multiplicity. In this case, one may actually extend formula \eqref{eq: theGeneralizedMillerXuFormula} to the set of caustics if one counts the members of $u_B$ with multiplicity (inherited from the corresponding roots of $h$) so that, in particular, members with even multiplicity vanish. For $u_0\in C^1(\T)$, however, this multiplicity may not be well defined and, worse yet, there may be points $(t,x)\in \mathcal{C}$ for which $h$ has an infinite number of roots. For example, if $u_0(z)=1-z$ for $z\in [0,1]$ then, at the point $(t,x)=(\frac{1}{2},1)\in\mathcal{C}$, we find for any $y\in[0,1]$ that $h(y)= y-u_0(x-2ty)=y-u_0(1-y)=y-(1-(1-y))=0$. 
\end{remark}
The following small lemma is used in the proof of Theorem \ref{thm: propertiesOfZD}.
\begin{lemma}\label{lem: weightedInequality}
    Let $f\in L^1\cap L^\infty(\R_+,\R)$, and $p\in[1,\infty)$. Then
    \begin{equation}\label{eq: weightedInequality}
        \|f\|_{L^1(\R_+)}\leq p^{\frac{1}{p}}\|f\|_{L^\infty(\R_+)}^{1-\frac{1}{p}}\||\cdot|^{p-1}f\|_{L^1(\R_+)}^{\frac{1}{p}},
    \end{equation}
with equality whenever $f(x)=\alpha\mathbbm{1}_{0<x<R}$ for $R>0$ and $\alpha\in\R$.
\end{lemma}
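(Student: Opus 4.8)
The plan is to pass to the distribution function of $|f|$ and reduce \eqref{eq: weightedInequality} to two elementary facts: a ``bathtub'' lower bound for a weighted integral over a set of prescribed measure, and Hölder's inequality on a bounded interval.

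First I would observe that both sides of \eqref{eq: weightedInequality} are unchanged when $f$ is replaced by $|f|$ (on $\R_+$ one has $|\cdot|^{p-1}=(\cdot)^{p-1}\ge 0$), so I may assume $f\ge 0$. Set $M\coloneqq\|f\|_{L^\infty(\R_+)}$ and let $\mu(\lambda)\coloneqq\big|\{x>0:f(x)>\lambda\}\big|$, which vanishes for $\lambda\ge M$. The layer-cake formula gives $\|f\|_{L^1(\R_+)}=\int_0^M\mu(\lambda)\,d\lambda$, while Tonelli's theorem (all quantities being finite since $f\in L^1\cap L^\infty$) gives $\big\||\cdot|^{p-1}f\big\|_{L^1(\R_+)}=\int_0^M\big(\int_{\{f>\lambda\}}x^{p-1}\,dx\big)\,d\lambda$.

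The one real idea is the following pointwise-in-$\lambda$ estimate: because $x\mapsto x^{p-1}$ is non-decreasing on $\R_+$ (this is where $p\ge 1$ enters), every measurable $E\subset\R_+$ with $|E|=\mu$ satisfies $\int_E x^{p-1}\,dx\ge\int_0^{\mu}x^{p-1}\,dx=\mu^p/p$; one proves this by noting that $E\setminus(0,\mu)\subset[\mu,\infty)$ and $(0,\mu)\setminus E\subset(0,\mu)$ have the same measure, so transferring the mass of the former onto the latter only decreases the integral. Applying this with $E=\{f>\lambda\}$ yields $\big\||\cdot|^{p-1}f\big\|_{L^1}\ge\frac1p\int_0^M\mu(\lambda)^p\,d\lambda$. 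On the other hand, Hölder's inequality on $[0,M]$ gives $\int_0^M\mu\,d\lambda\le M^{1-1/p}\big(\int_0^M\mu^p\,d\lambda\big)^{1/p}$. Combining the two estimates, $\|f\|_{L^1}^{\,p}\le M^{p-1}\int_0^M\mu^p\,d\lambda\le pM^{p-1}\big\||\cdot|^{p-1}f\big\|_{L^1}$, and taking $p$-th roots is precisely \eqref{eq: weightedInequality}. For $f=\alpha\mathbbm{1}_{0<x<R}$ the distribution function is constant ($\mu\equiv|\alpha|$ on $[0,|\alpha|)$), so Hölder is an equality, and each level set $\{f>\lambda\}$ equals $(0,R)=(0,\mu(\lambda))$, so the bathtub bound is an equality; hence every inequality above is saturated. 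There is no genuine obstacle here: once the distribution-function reformulation is in place, the bathtub comparison and Hölder do all the work.
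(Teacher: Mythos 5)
Your proof is correct, but it takes a genuinely different route from the paper's. The paper argues in three lines on the $x$-side: setting $F(x)=\int_0^x|f(y)|\,dy$, it writes $\|f\|_{L^1(\R_+)}^p=F(\infty)^p=p\int_0^\infty F^{p-1}(x)F'(x)\,dx$ and then bounds $F^{p-1}(x)\le\big(x\|f\|_{L^\infty(\R_+)}\big)^{p-1}$, which gives \eqref{eq: weightedInequality} immediately; the equality claim for $\alpha\mathbbm{1}_{0<x<R}$ is then a direct check. You instead work on the $\lambda$-side via the distribution function, combining the layer-cake formula, a bathtub comparison $\int_E x^{p-1}\,dx\ge\mu^p/p$ for $|E|=\mu$, and H\"older on $[0,M]$; each of these steps is correctly justified (in particular the mass-transfer argument for the bathtub bound is fine, and $\mu(\lambda)<\infty$ for $\lambda>0$ by Chebyshev, so the comparison applies for a.e.\ $\lambda$). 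Both arguments use $p\ge1$ in the same essential way --- monotonicity of $x^{p-1}$ for you, $F^{p-1}\le(x\|f\|_\infty)^{p-1}$ for the paper. The paper's computation is shorter; yours buys a sharper view of the equality case, since it exhibits exactly which two inequalities must saturate and hence essentially characterizes the extremizers as multiples of indicators of intervals anchored at the origin, rather than merely verifying the stated sufficient condition. One cosmetic slip: for $f=\alpha\mathbbm{1}_{0<x<R}$ the distribution function equals $R$ (not $|\alpha|$) on $[0,|\alpha|)$; this does not affect your argument, which only uses that it is constant there.
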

\begin{proof}
   Set $F(x)\coloneqq \int_0^x |f(y)|\, dy\leq x\|f\|_{L^\infty(\R_+)}$. Then
    \begin{equation*}
 \|f\|_{L^1(\R_+)}^p=F(\infty)^p = p\int_0^\infty F^{p-1}(x)F'(x)\, dx\leq p\|f\|_{L^\infty(\R_+)}^{p-1}\int_0^\infty x^{p-1}|f(x)|\, dx.
    \end{equation*}
The last part of the lemma is trivial.
\end{proof}

\section*{Data Availability Statement}
Data sharing not applicable to this article as no datasets were generated or analyzed during the current study.
\section*{Conflict of Interest}
The author declares that there is no conflict of interest.
\bibliographystyle{abbrv}
\bibliography{Bibliography} 
\end{document}